\definecolor{gr}{rgb}{0.7, 0.0, 0.15}
\newtheorem{theorem}{\bf Theorem}[section]
\newtheorem{corollary}{\bf Corollary}[section]
\newtheorem{remark}{\bf Remark}[section]
\newtheorem{lemma}{\bf Lemma}[section]
\numberwithin{equation}{section}
\begin{document}
\title{Distribution of a Unified $(k_1,k_2,\ldots,k_m)$-run}
\author[1]{{\Large Savita Chaturvedi}}
\author[2]{{\Large Amit N. Kumar}}
\affil[1,2]{Department of Mathematical Sciences, Indian Institute of Technology (BHU), Varanasi (Uttar Pradesh) - 221 005, India.}
\affil[1]{Email: savitachaturvedi.rs.mat23@itbhu.ac.in}
\affil[2]{Email: amit.mat@itbhu.ac.in}
\date{}
\maketitle

\begin{abstract}
\noindent
We explore a unified $(k_1,k_2,\ldots,k_m)$-run in multi-state trials, examining its distributional properties and waiting time distribution. Our study reveals that this particular run serves as a generalization encompassing various patterns. Additionally, we discuss various results pertaining to existing patterns as special cases. To illustrate our findings, we provide an application related to DNA frequent patterns.
\end{abstract}

\noindent
\begin{keywords}
Multi-state trials; generating functions; exact distribution; waiting time distribution.
\end{keywords}\\
{\bf MSC 2020 Subject Classifications:} Primary: 62E99, 62E15; Secondary: 60E05, 60E10.

\section{Introduction and Preliminaries}\label{11:sec1}
The concept of runs has extensive applications across various fields such as climatology, quality control, biology, computer science, and DNA sequence analysis, among many others. In a sequence of Bernoulli trials with possible outcomes of failure and success, the distribution of the number of occurrences of $k$-runs (defined as at least $k$ consecutive successes) holds significant importance and has been extensively discussed in the literature. Several authors have studied the distributional properties of $k$-runs and its waiting time distribution along with various applications, see for more details, Aki {\em et al.} \cite{AKH}, Antzoulakos {\em et al.} \cite{ABK}, Antzoulakos and Chadjiconstantinidis \cite{AC}, Balakrishnan and Koutras \cite{BK}, Fu and Koutras \cite{FK}, Makri {\em et al.} \cite{MPP}, Philippou {\em et al.} \cite{PGP}, and references therein. Additionally, the research on approximations related to runs has also been extensively explored in the literature. For more details, see \v{C}ekanavi\v{c}ius and Vellaisamy \cite{CV2015,CV2019,CV2020}, Fu and Johnson \cite{FJ}, Kumar and Kumar \cite{KK2024}, Kumar and Upadhye \cite{KU, KU2022}, Kumar {\em et al.} \cite{KUV}, Liaudanskait\.{e} and \v{C}ekanavi\v{c}ius \cite{LC2022}, Upadhye and Kumar \cite{UK2018}, Upadhye {\em et al.} \cite{UCV}, Vellaisamy \cite{V}, Wang and Xia \cite{WX}, and references therein.

\noindent
Huang and Tsai \cite{HT} extended the concept of $k$-runs by introducing the notion of $(k_1, k_2)$-runs, which involve the pattern at least $k_1$ consecutive failures followed by at least $k_2$ consecutive successes. Subsequently, Dafnis {\em et al.} \cite{DAP} explored three variations of $(k_1,k_2)$-runs, considering scenarios involving at least, exactly, and at most occurrences of runs. Further, Kumar and Upadhye \cite{KU2019} generalized these patterns to encompass three general types of runs. While previous studies focused on two-state trials in studying $k$-runs and $(k_1,k_2)$-runs, the demand across various applications necessitates the exploration of $(k_1,k_2,\ldots,k_m)$-runs in multi-state trials. For instance, in the DNA sequence analysis, it is crucial to represent sequences using four letters: A, G, C, and T, which correspond to a sequence of four-state trials. Kong \cite{Kong2021} initiated this study by focusing the run at least $k_1$ consecutive 1's followed by $k_2$ consecutive 2's $\ldots$ followed by $k_m$ consecutive $m$'s in a sequence of multi-state trials with labels $\{1,2,\ldots,m\}$. Mathematically, let $\xi_1,\xi_2,\ldots,\xi_n$ be a sequence of independent and identically distributed (\textit{iid}) multinomial random variables, each taking values from the set $\{1,2,\ldots,m\}$, with probability $\mathbb{P}(\xi_i=j)=p_j$, for $j=1,2,\ldots,m$, and $\sum_{j=1}^{m}p_j=1$. Define
\begin{align*}
    \mathbb{I}_i:=\left(\prod_{j_1=i}^{i+k_1-1}(2-\xi_{j_1})\right)\left(\prod_{j_2=i+k_1}^{i+k_1+k_2-1}(3-\xi_{j_2})\right)\ldots \left(\prod_{j_m=i+k_1+\cdots+k_{m-1}}^{i+k_1+\cdots+k_m-1}(m+1-\xi_{j_m})\right).
\end{align*}
Then, 
\begin{align*}
    X_{m;k_1,k_2,\ldots,k_m}^{(n)}:=\sum_{i=1}^{n-k_1-\cdots-k_m+1}\mathbb{I}_i
\end{align*}
represents the number of occurrences of the considered $(k_1,k_2,\ldots,k_m)$-run. Kong \cite{Kong2021} developed a generating function approach to derive the exact distribution of $X_{m;k_1,k_2,\ldots,k_m}^{(n)}$. Additionally, Chadjiconstantinidis and Eryilmaz \cite{CE2023} have studied the waiting time distribution of $X_{m;k_1,k_2,\ldots,k_m}^{(n)}$.

\noindent
In this paper, we explore a unified $(k_1,k_2,\ldots,k_m)$-run in a sequence of multi-state trials with labels $\{1,2,\ldots,m\}$ defined as follows:
\begin{enumerate}
    \item[\textbf{(R)}] at least $\ell_1$ and at most $k_1$ consecutive $1$'s followed by at least $\ell_2$ and at most $k_2$ consecutive $2$'s $\ldots$ followed at least $\ell_m$ and at most $k_m$ consecutive $m$'s.
\end{enumerate}

\noindent
Here, we assume $k_i\ge\ell_i\ge 1$.\\
Mathematically, let
\begin{align*}
    \mathbb{I}_{i;r_1,r_2,\ldots,r_m}:=\left(\prod_{j_1=i}^{i+r_1-1}(2-\xi_{j_1})\right)\left(\prod_{j_2=i+r_1}^{i+r_1+r_2-1}(3-\xi_{j_2})\right)\ldots \left(\prod_{j_m=i+r_1+\cdots+r_{m-1}}^{i+r_1+\cdots+r_m-1}(m+1-\xi_{j_m})\right),
\end{align*}
where $r_i=s_i+\ell_i$, for $i=1,2,\ldots,m$, and $\displaystyle{\mathbb{I}_i^*:=\max_{\substack{ r_i\le \ell_i+k_i\\i=1,2,\ldots,m}}\mathbb{I}_{i;r_1,r_2,\ldots,r_m}}$. Then
\begin{align*}
    X_{m;\ell_1:k_1,\ell_2:k_2,\ldots,\ell_m:k_m}^{(n)}:=\sum_{i=1}^{n-\ell_1-\cdots-\ell_m+1}\mathbb{I}_i^*
\end{align*}
represents the number of occurrences of the run \textbf{(R)}. To simplify the presentation of paper,  we introduce the following notations:
\begin{itemize}
\item $E_{k_i}$ denotes a run of at least $k_i$ consecutive $i$'s.
\item $F_{k_i}$ denotes a run of exactly $k_i$ consecutive $i$'s.
\item $G_{k_i}$ denotes a run of at most $k_i$ consecutive $i$'s.
\item $E_{\ell_i,k_i}$ denotes a run of at least $\ell_i$ and at most $k_i$ consecutive $i$'s.
\end{itemize}
Moreover, we define the notation $E_{i} \prec E_{j}$ to represent the event $E_{i}$ appears immediately after the event $E_{j}$. Therefore, the run \textbf{(R)} is equivalent to $E_{\ell_1,k_1} \prec E_{\ell_2,k_2} \prec \cdots \prec E_{\ell_m,k_m}$. Note that if $\ell_i=k_i$ and $\ell_i=1$, for $i=1,2,\ldots,m$, then $E_{\ell_i,k_i}=F_{k_i}$ and $E_{\ell_i,k_i}=G_{k_i}$, respectively. In such cases, the run \textbf{(R)} simplifies to $F_{k_1} \prec F_{k_2} \prec \cdots \prec F_{k_m}$ and $G_{k_1} \prec G_{k_2} \prec \cdots \prec G_{k_m}$, respectively. It is worth noting that $E_{\ell_i,k_i}\to E_{\ell_i}$ when $k_i\to \infty$. Therefore, if $k_i\to \infty$, for $i=1,2,\ldots,m$, then the run \textbf{(R)} reduces to $E_{\ell_1} \prec E_{\ell_2} \prec \cdots \prec E_{\ell_m}$, which is studied by by Kong \cite{Kong2021}. Thus, the run \textbf{(R)} can be transformed into a $(k_1,k_2,\ldots,k_m)$-run with any $k_i$, for $i=1,2,\ldots,m$, defined as at least, exactly, and at most $k_i$ consecutive $i$'s. Therefore, the study of the run \textbf{(R)} suffices to study any type of $(k_1,k_2,\ldots,k_m)$-run with all at least, exactly, and at most cases.

\noindent
We employ a modified methodology introduced by Kong \cite{Kong2001,Kong2021} to derive our results. The advantage of this method lies in its utilization of the generating functions of individual patterns to derive the generating function for complicated patterns of interest. The technique can be formulated as follows: Suppose a linear sequence comprising multiple objects with values drawn from the set $\{1,2,\ldots,m\}$, where objects may occur repeatedly. Subsequently, we partition the sequence into distinct patterns of our interest, each forming specific blocks. These blocks are organized based on the pattern we wish to explore, ensuring relevance and coherence throughout the analysis. The relationship between blocks is characterized by the interaction $w_{ij}$, representing the influence of the $i$th type of block on the left and the $j$th type of block on the right. Furthermore, we denote the generating function as $g_i$ associated with block $i$. The generating function for the entire system can be expressed as 
\begin{align}
    G=1+\boldsymbol{eM}^{-1}\boldsymbol{g}.\label{11:eq1}
\end{align}
Here, $\boldsymbol{e}=(1~1~\cdots~1)$ represents a row vector with all entries equal to $1$, $\boldsymbol{g}=(g_1 ~ g_2 ~ \cdots ~ g_r)^{T}$ represents a column vector, where each entry corresponds to a generating function $g_i$, $i=1,2,\ldots,r$, and 
\begin{align*}
    \boldsymbol{M}:=\begin{pmatrix}
        1 & -g_1 w_{12} & -g_1 w_{13} & \cdots & -g_1 w_{1r}\\
        -g_2 w_{21} & 1 & -g_2 w_{23} & \cdots & -g_2 w_{2r}\\
        \vdots & \vdots & \vdots & \ddots & \vdots\\
        -g_r w_{r1} & -g_r w_{r2} & -g_r w_{r3} & \cdots & 1
    \end{pmatrix}.
\end{align*}
When blocks are forbidden, the interaction is set to zero. To compute the distribution of $(k_1, k_2, ..., k_m)$-runs, one approach is to initially consider two consecutive blocks together. Subsequently, by iteratively adding more blocks, we can obtain the generating function for $(k_1, k_2, ..., k_m)$-runs. For further details,  we refer the reader to Kong \cite{Kong2001,Kong2006,Kong2021}.

\noindent
This paper is organized as follows: In Section \ref{11:sec2}, we derive the distributional properties of the random variable $X_{m;\ell_1:k_1,\ell_2:k_2,\ldots,\ell_m:k_m}^{(n)}$. Our study reveals that this random variable serves as a generalization encompassing various patterns. Additionally, we discuss various results pertaining to existing patterns as special cases. In Section \ref{11:sec3}, we further explore the distributional properties of the waiting time distribution of $X_{m;\ell_1:k_1,\ell_2:k_2,\ldots,\ell_m:k_m}^{(n)}$. Finally, in Section \ref{11:sec4}, we present an application to DNA frequent pattern, illustrating the practical implications of our results.

\section{Distribution of $\boldsymbol{X_{m;\ell_1:k_1,\ell_2:k_2,\ldots,\ell_m:k_m}^{(n)}}$}\label{11:sec2}
In this section, we explore the run \textbf{(R)} introduced in Section \ref{11:sec1} and derive its distributional properties using the generating function approach proposed by Kong \cite{Kong2021}. Our proofs are established through induction. Initially, we focus on the case of two-state trials, presenting several findings related to previous studies. Subsequently, we extend our analysis to multi-state trials, where we demonstrate the applicability of our results and how they encompass existing findings as special cases. 

\noindent
We begin to introduce some notations that will be useful for further finding. Throughout this paper, let $m\ge 2$, $\prod_{i}^{j}=1$ and $\sum_{i}^{j}=0$, for $j<i$. Recall that $\xi_1,\xi_2,\ldots,\xi_n$ is a sequence of \textit{iid} multinomial random variables, each taking values from the set $\{1,2,\ldots,m\}$, with probability $\mathbb{P}(\xi_i=j)=p_j$, for $i=1,2,\ldots,n$, $j=1,2,\ldots,m$, and $\sum_{j=1}^{m}p_j=1$. The random variable $X_{m;\ell_1:k_1,\ell_2:k_2,\ldots,\ell_m:k_m}^{(n)}$ denotes the number of occurrences of the run $E_{\ell_1,k_1} \prec E_{\ell_2,k_2} \prec \cdots \prec E_{\ell_m,k_m}$. Further, let us denote $p_{m,n}(r)=\mathbb{P}(X_{m;\ell_1:k_1,\ell_2:k_2,\ldots,\ell_m:k_m}^{(n)}=r)$. The generating function for the sequence of length $n$ is represented as
\begin{align*}
    \phi_{m,n}(w)=\sum_{j=0}p_{m,n}(j)w^j
\end{align*}
and the double generating function is expressed as
\begin{align}
    \Phi_m(w,z)=\sum_{n=0}^{\infty}\phi_{m,n}(w)z^n=\sum_{n=0}^{\infty}\sum_{j=0}p_{m,n}(j)w^jz^n.\label{11:dgf}
\end{align}
Next, we introduce some generating functions as follows:
\begin{align*}
    g_i^{(1)}&=\sum_{j=1}^{\ell_i-1}(p_i z)^j=\frac{p_i z-(p_i z)^{\ell_i}}{1-p_i z}, \quad i=1,2,\ldots,m,\\
      g_i^{(2)}&=\sum_{j=k_1+1}^{\infty}(p_i z)^j=\frac{(p_i z)^{k_i+1}}{1-p_i z}, \quad i=1,2,\ldots,m,\\
      g_i^{(3)}&=\sum_{j=\ell_i}^{k_i}(p_i z)^j=\frac{(p_i z)^{\ell_i}-(p_i z)^{k_i+1}}{1-p_i z}, \quad i=1,2,\ldots,m.
\end{align*}
Here, we assume $p_iz<1$ for all $i=1,2,\ldots,m$. The following result provides the double generating function for the probabilities of the  run $E_{\ell_1,k_1} \prec E_{\ell_2,k_2}$.

\begin{theorem}\label{11:th1}
    The double generating function for the probabilities of the run \textbf{(R)} with $m=2$ is given by
    \begin{align}
        \Phi_2(w,z)=\frac{1}{1-z-(w-1)((p_1z)^{\ell_1}-(p_1 z)^{k_1+1})((p_2z)^{\ell_2}-(p_2 z)^{k_2+1})}.\label{11:dgf1}
    \end{align}
\end{theorem}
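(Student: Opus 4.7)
My plan is to derive the formula by instantiating Kong's block-decomposition framework \eqref{11:eq1} for the two-letter alphabet $\{1,2\}$ and then collapsing the algebra using $p_1+p_2=1$.

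Every non-empty sequence in $\{1,2\}^{*}$ decomposes uniquely into an alternating chain of maximal $1$-blocks and $2$-blocks. I will classify each maximal $1$-block as $A_0$ when its length lies in $[\ell_1,k_1]$ (generating function $a:=g_1^{(3)}$) and as $A'$ otherwise (gf $a':=g_1^{(1)}+g_1^{(2)}$), and similarly each maximal $2$-block as $B_0$ (gf $b:=g_2^{(3)}$) or $B'$ (gf $b':=g_2^{(1)}+g_2^{(2)}$). Unwinding the definition of $\mathbb{I}_i^*$ in the case $m=2$, an occurrence of the run $E_{\ell_1,k_1}\prec E_{\ell_2,k_2}$ at position $i$ is precisely the event that $i$ is the start of an $A_0$-block immediately followed by a $B_0$-block. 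In Kong's interaction matrix the transitions $A\!\to\!A$ and $B\!\to\!B$ are therefore forbidden by maximality, $A_0\to B_0$ carries the marking weight $w$, and every other $A\!\to\!B$ or $B\!\to\!A$ carries weight $1$.

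Next, let $\alpha$ and $\beta$ denote the $w$-weighted generating functions of non-empty sequences ending in a $1$-block and a $2$-block respectively, and let $\alpha_0:=a(1+\beta)$ be the refinement of $\alpha$ recording sequences terminating in an $A_0$-block. The last-block decomposition yields
\begin{align*}
\alpha &= (a+a')(1+\beta),\\
\beta &= (b+b')(1+\alpha)+b(w-1)\alpha_0,
\end{align*}
where $b(w-1)\alpha_0$ is exactly the correction that upgrades the $A_0\to B_0$ interaction from weight $1$ to weight $w$. Since $\Phi_2(w,z)=1+\alpha+\beta$, eliminating $\alpha$ and $\alpha_0$, solving the resulting linear equation for $\beta$, and reassembling gives
\[
\Phi_2(w,z)=\frac{(1+P_1)(1+P_2)}{1-P_1P_2-(w-1)ab},\qquad P_i:=\frac{p_iz}{1-p_iz}.
\]

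To finish, I multiply numerator and denominator by $(1-p_1z)(1-p_2z)$. Using $(1+P_i)(1-p_iz)=1$, the identity $(1-p_1z)(1-p_2z)-p_1p_2z^2=1-(p_1+p_2)z$, and the two-state constraint $p_1+p_2=1$, the numerator reduces to $1$ and the denominator becomes $1-z-(w-1)\bigl((p_1z)^{\ell_1}-(p_1z)^{k_1+1}\bigr)\bigl((p_2z)^{\ell_2}-(p_2z)^{k_2+1}\bigr)$, matching \eqref{11:dgf1}. The only genuinely non-mechanical step is pinpointing the correct $(w-1)\alpha_0$ correction term in the $\beta$-equation; the remainder is linear-system bookkeeping followed by the clean cancellation enabled by $p_1+p_2=1$.
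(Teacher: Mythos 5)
Your proposal is correct and follows essentially the same route as the paper: the paper's proof also partitions the sequence into maximal same-letter blocks classified by whether their length falls in $[\ell_i,k_i]$, forbids same-letter adjacencies, places the weight $w$ on the $A_0\to B_0$ interaction, and reads off $\Phi_2$ from Kong's formula \eqref{11:eq1} with a $6\times 6$ matrix $\boldsymbol{M}$. The only difference is presentational --- you merge $g_i^{(1)}$ and $g_i^{(2)}$ into a single unmarked class and solve the resulting two last-block recursions by hand (including the final cancellation via $p_1+p_2=1$), whereas the paper leaves the evaluation of $\boldsymbol{e}\boldsymbol{M}^{-1}\boldsymbol{g}$ implicit.
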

\begin{proof}
Let us divide the sequence into three distinct blocks as follows: the first block consists of at most $\ell_i-1$ consecutive $i$'s, the second block consists of at least $k_i+1$ consecutive $i$'s, and the third block consists of at least $\ell_i$ to at most $k_i$ consecutive $i$'s, for $i=1,2$. Then the generating functions for these blocks are $g_i^{(1)}$, $g_i^{(2)}$, and $g_i^{(3)}$, respectively. Thus, $\boldsymbol{e}=[1~1~1~1~1~1]$,
\begin{align*}
    \boldsymbol{M}=\begin{pmatrix}
        1 & 0 & 0 & -g_1^{(1)} & -g_1^{(1)} &-g_1^{(1)}\\
        0 & 1 & 0 & -g_1^{(2)} & -g_1^{(2)} &-g_1^{(2)}\\
        0 & 0 & 1 & -g_1^{(3)} & -g_1^{(3)} &-g_1^{(3)}w\\
        -g_2^{(1)} & -g_2^{(1)} & -g_2^{(1)} & 1 & 0 & 0\\
        -g_2^{(2)} & -g_2^{(2)} & -g_2^{(2)} & 0 & 1 & 0\\
        -g_2^{(3)} & -g_2^{(3)} & -g_2^{(3)} & 0 & 0 & 1\\
    \end{pmatrix},
\end{align*}
 and $\boldsymbol{g}=[g_1^{(1)}~g_1^{(2)}~g_1^{(3)}~g_2^{(1)}~g_2^{(2)}~g_2^{(3)}]^T$. Observe that the interaction is defined based on the adjacency of the blocks, where $w_{12}=0$ indicates that these two blocks are forbidden to be adjacent to each other. The same  argument applies to cases where the interaction is zero. Additionally, $w_{36}=w$ is used to track the number patterns of the run \textbf{(R)} with $m=2$. Hence, substituting $\boldsymbol{e}$, $\boldsymbol{M}$ and $\boldsymbol{g}$ in \eqref{11:eq1}, the result follows.
\end{proof}

\begin{remark}
    Note that the double generating function presented in \eqref{11:dgf1} differs from the one derived in (7) of Kumar and Upadhye \cite{KU2019}.. The disparity arises from their consideration of a failure following the pattern $E_{\ell_1,k_1}\prec E_{\ell_2,k_2}$. A similar argument holds to the pattern $E_{\ell_1}\prec E_{\ell_2,k_2}$.
\end{remark}

\noindent
Next, the following corollary can be straightforwardly derived by letting $k_i \to \infty$, $\ell_i=k_i$, and $\ell_i=1$ for $i = 1, 2$, in \eqref{11:dgf1}.

\begin{corollary}
\begin{itemize}
    \item[(i)] The double generating function for the probabilities of the run $E_{\ell_1}\prec E_{\ell_2}$ is given by
       \begin{align}
        \Phi_2(w,z)=\frac{1}{1-z-(w-1)(p_1z)^{\ell_1}(p_2z)^{\ell_2}}.\label{11:eq2}
    \end{align}
    \item[(ii)] The double generating function for the probabilities of the run $F_{k_1}\prec F_{k_2}$ is given by
       \begin{align}
        \Phi_2(w,z)=\frac{1}{1-z-(w-1)(1-p_1z)(1-p_2z)(p_1z)^{k_1}(p_2z)^{k_2}}.\label{11:eq3}
    \end{align}
    \item[(iii)] The double generating function for the probabilities of the run $G_{k_1}\prec G_{k_2}$ is given by
       \begin{align}
        \Phi_2(w,z)=\frac{1}{1-z-(w-1)(p_1z)(p_2z)(1-(p_1z)^{k_1})(1-(p_2z)^{k_2})}.\label{11:eq4}
    \end{align}
    \end{itemize}
\end{corollary}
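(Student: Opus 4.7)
The plan is to obtain all three identities as direct specializations of the formula \eqref{11:dgf1} established in Theorem \ref{11:th1}. Only the factor
\[
A := \bigl((p_1z)^{\ell_1}-(p_1 z)^{k_1+1}\bigr)\bigl((p_2z)^{\ell_2}-(p_2 z)^{k_2+1}\bigr)
\]
depends on the parameters $\ell_i,k_i$, so in each case the task reduces to simplifying $A$ under the prescribed regime and substituting the result back into \eqref{11:dgf1}.

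For part (i), I let $k_i\to\infty$ for $i=1,2$. Under the standing assumption $p_iz<1$, each term $(p_iz)^{k_i+1}$ vanishes in the limit, so $A\to(p_1z)^{\ell_1}(p_2z)^{\ell_2}$, which yields \eqref{11:eq2}. I should also briefly remark that the underlying event $E_{\ell_1,k_1}\prec E_{\ell_2,k_2}$ degenerates to $E_{\ell_1}\prec E_{\ell_2}$ in the limit, and that for any fixed $n$ the coefficient of $z^n$ in $\Phi_2(w,z)$ stabilizes as soon as $k_i$ is sufficiently large; this gives a formal (coefficient-wise) justification of the limiting generating function without invoking any analytic subtleties.

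For part (ii), setting $\ell_i=k_i$ gives $(p_iz)^{\ell_i}-(p_iz)^{k_i+1}=(p_iz)^{k_i}(1-p_iz)$, hence $A=(1-p_1z)(1-p_2z)(p_1z)^{k_1}(p_2z)^{k_2}$, and \eqref{11:eq3} follows. For part (iii), setting $\ell_i=1$ gives $(p_iz)^{\ell_i}-(p_iz)^{k_i+1}=p_iz\bigl(1-(p_iz)^{k_i}\bigr)$, hence $A=(p_1z)(p_2z)\bigl(1-(p_1z)^{k_1}\bigr)\bigl(1-(p_2z)^{k_2}\bigr)$, which yields \eqref{11:eq4}.

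There is no real obstacle here: the corollary is pure bookkeeping on top of Theorem \ref{11:th1}. The only mildly delicate point is the formal justification of the limit in (i), which I handle as above by appealing to coefficient-wise stabilization rather than by an analytic argument.
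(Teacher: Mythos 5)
Your proposal is correct and follows exactly the route the paper takes: the corollary is obtained by specializing Theorem \ref{11:th1} via $k_i\to\infty$, $\ell_i=k_i$, and $\ell_i=1$ respectively, and your algebraic simplifications of the factor $\bigl((p_1z)^{\ell_1}-(p_1 z)^{k_1+1}\bigr)\bigl((p_2z)^{\ell_2}-(p_2 z)^{k_2+1}\bigr)$ in each case are accurate. The coefficient-wise stabilization remark for the limit in (i) is a small extra justification the paper omits, but it does not change the argument.
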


\begin{remark}
   It is worth noting that the double generating functions provided in \eqref{11:eq2}, \eqref{11:eq3}, and \eqref{11:eq4} coincide with those derived by Dafnis {et al.} \cite{DAP} in (3.1), (3.2), and (3.3), respectively. 
\end{remark}

\noindent
Further, the following corollary can be obtained by taking $k_2 \to \infty$ in \eqref{11:dgf1}.

\begin{corollary}
The double generating function for the probabilities of the run $E_{\ell_1,k_1}\prec E_{\ell_2}$ is given by
       \begin{align}
        \Phi_2(w,z)=\frac{1}{1-z-(w-1)(p_1z)^{\ell_1}(p_2z)^{\ell_2}(1-(p_1 z)^{k_1-\ell_1+1})}.\label{11:eq5}
    \end{align}
\end{corollary}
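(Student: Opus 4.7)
The plan is to derive this corollary as a direct specialization of Theorem \ref{11:th1}, by letting the upper bound $k_2$ on the second block tend to infinity in the formula \eqref{11:dgf1}. Conceptually, the pattern $E_{\ell_1,k_1}\prec E_{\ell_2}$ is exactly $E_{\ell_1,k_1}\prec E_{\ell_2,k_2}$ with the cap on the number of consecutive $2$'s removed, so at the level of the generating functions $g_2^{(2)}$ and $g_2^{(3)}$ appearing in the proof of Theorem \ref{11:th1}, sending $k_2\to\infty$ is the correct operation.

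Concretely, I would start from \eqref{11:dgf1} and isolate the factor $(p_2 z)^{\ell_2}-(p_2 z)^{k_2+1}$. Using the standing assumption $p_2 z<1$, the term $(p_2 z)^{k_2+1}\to 0$ as $k_2\to\infty$, so this factor collapses to $(p_2 z)^{\ell_2}$. Next, I would rewrite the first factor by pulling out $(p_1 z)^{\ell_1}$:
\begin{align*}
(p_1 z)^{\ell_1}-(p_1 z)^{k_1+1}=(p_1 z)^{\ell_1}\bigl(1-(p_1 z)^{k_1-\ell_1+1}\bigr),
\end{align*}
which is valid since $k_1\geq \ell_1\geq 1$. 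Substituting both simplifications into \eqref{11:dgf1} immediately yields the expression claimed in \eqref{11:eq5}.

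The only point beyond routine algebra that deserves a sentence of justification is the legitimacy of the limit at the level of the double generating function. Since $\Phi_2(w,z)$ is a formal power series in $z$ whose coefficients are polynomials in $w$, and since for any fixed sequence length $n$ the constraint ``at most $k_2$ consecutive $2$'s'' becomes vacuous once $k_2\ge n$, the coefficient-wise limit in $k_2$ agrees with the formal substitution $k_2\to\infty$ performed inside the rational expression \eqref{11:dgf1}. I do not anticipate any genuine obstacle here; the entire proof is a one-line limit followed by the factorisation above.
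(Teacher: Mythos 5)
Your proposal is correct and follows exactly the paper's route: the paper obtains this corollary by letting $k_2\to\infty$ in \eqref{11:dgf1}, and your factorisation $(p_1 z)^{\ell_1}-(p_1 z)^{k_1+1}=(p_1 z)^{\ell_1}\bigl(1-(p_1 z)^{k_1-\ell_1+1}\bigr)$ together with $(p_2 z)^{k_2+1}\to 0$ reproduces \eqref{11:eq5} precisely. Your extra remark justifying the coefficient-wise limit is a sensible addition that the paper omits.
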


\begin{remark}
   Note that the double generating functions presented in \eqref{11:eq5} coincide with one derived by Kumar and Upadhye \cite{KU2019} in (3). Moreover, the obtained result in Theorem \ref{11:th1} extends to yield the double generating function for any type of $(k_1,k_2)$-runs with all at least, exactly, and at most cases.
\end{remark}

\noindent
It is important to highlight the advantage of this technique in connecting one pattern with another. If we aim to add one more pattern to previously obtained patterns, then information about the generating function of the pattern appearing at the right end of the sequence becomes crucial. This allows us to establish connections between adjacent blocks, facilitating the derivation of generating functions for more generalized patterns. For instance, in determining the generating function of the pattern $E_{\ell_1,k_1}\prec E_{\ell_2,k_2}\prec E_{\ell_3,k_3}$, we require the generating functions of the pattern $E_{\ell_1,k_1}\prec E_{\ell_2,k_2}$ appearing at the end of the sequence. Furthermore, we can derive the generating function of the run $E_{\ell_1,k_1}\prec E_{\ell_2,k_2}\prec E_{\ell_3,k_3}$ by adding $E_{\ell_3,k_3}$ to the right end sequence of $E_{\ell_1,k_1}\prec E_{\ell_2,k_2}$. This iterative process continues until we obtain the generating function of the pattern $E_{\ell_1,k_1}\prec E_{\ell_2,k_2}\prec \ldots \prec E_{\ell_m,k_m}$. Hence, our initial focus is on computing the generating functions of the pattern \textbf{(R)} at the right end of the sequence. The following theorem will provide the generating function of the pattern \textbf{(R)} that appears to the right end of the sequence.

\begin{theorem}\label{11:th2}
   The generating function for the probabilities of the pattern $E_{\ell_1,k_1}\prec E_{\ell_2,k_2}\prec \ldots \prec E_{\ell_m,k_m}$ appearing at the right end of the sequence is given by
   \begin{align}
       Y_m(z)=\frac{\prod_{i=1}^{m}((p_i z)^{\ell_i}-(p_i z)^{k_i+1})}{\left(1-z\sum_{i=1}^{m}p_i \right)\prod_{i=2}^{m}(1-p_i z)}.\label{11:eq12}
   \end{align}
  The generating function for the complement of patterns that do not contain such a sequence is given by
   \begin{align*}
       N_m(z)=\frac{z\left(\sum_{i=1}^{m}p_i\right)\prod_{i=2}^{m}(1-p_i z)-\prod_{i=1}^{m}((p_i z)^{\ell_i}-(p_i z)^{k_i+1})}{\left(1-z \sum_{i=1}^{m}p_i\right)\prod_{i=2}^{m}(1-p_i z)}.
   \end{align*}
\end{theorem}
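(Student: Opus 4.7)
The plan is to decompose any sequence that ends with the pattern $E_{\ell_1,k_1}\prec E_{\ell_2,k_2}\prec\cdots\prec E_{\ell_m,k_m}$ uniquely into a prefix $P$ followed by the $m$ consecutive maximal runs $E_{\ell_1,k_1},\ldots,E_{\ell_m,k_m}$, and then compute each factor's generating function separately. The blocks $E_{\ell_i,k_i}$ with $i\ge 2$ are flanked on the left by a symbol of type $i-1$ and on the right by a symbol of type $i+1$ (or the end of the sequence), so they are automatically maximal runs and each contributes the factor $g_i^{(3)}=\frac{(p_iz)^{\ell_i}-(p_iz)^{k_i+1}}{1-p_iz}$. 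The only non-trivial constraint is that the prefix $P$ must either be empty or end in a symbol different from $1$, so that the block $E_{\ell_1,k_1}$ is itself a maximal run of $1$'s.

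To compute the generating function of admissible prefixes, I partition all sequences over $\{1,\ldots,m\}$ by the last symbol. Denoting by $S_j(z)$ the generating function of non-empty sequences whose last symbol is $j$, a block-decomposition argument in the spirit of Theorem~\ref{11:th1} (or equivalently, solving the linear system $S_j=(1+\sum_{i\ne j}S_i)\cdot \frac{p_jz}{1-p_jz}$ obtained by isolating the trailing maximal run of $j$'s) gives $S_j(z)=\frac{p_jz}{1-z\sum_{i=1}^{m}p_i}$. Therefore the admissible-prefix generating function is
\begin{equation*}
1+\sum_{j=2}^{m}S_j(z)=\frac{1-p_1z}{1-z\sum_{i=1}^{m}p_i},
\end{equation*}
and multiplying by $\prod_{i=1}^{m}g_i^{(3)}$ and cancelling the common factor $(1-p_1z)$ produces the stated closed form for $Y_m(z)$.

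For $N_m(z)$, every non-empty sequence either ends with the pattern or does not, so
\begin{equation*}
Y_m(z)+N_m(z)=\frac{1}{1-z\sum_{i=1}^{m}p_i}-1=\frac{z\sum_{i=1}^{m}p_i}{1-z\sum_{i=1}^{m}p_i}.
\end{equation*}
Subtracting $Y_m(z)$ and placing the difference over the common denominator $(1-z\sum_{i=1}^{m}p_i)\prod_{i=2}^{m}(1-p_iz)$ immediately yields the stated expression for $N_m(z)$. I expect the main technical point to be the careful bookkeeping of the maximality condition at the prefix--pattern interface; once this is captured by the single correction factor $(1-p_1z)$, the remaining steps reduce to routine algebraic simplification.
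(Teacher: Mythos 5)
Your proof is correct, but it takes a genuinely different route from the paper. The paper proves Theorem \ref{11:th2} by induction on $m$ within Kong's transfer-matrix formalism: for $m=2$ it sets up a $7\times 7$ matrix containing an extra ``end-of-sequence'' block and a marker variable $u$, extracts the coefficient of $u$ in $\boldsymbol{e}\boldsymbol{M}^{-1}\boldsymbol{g}$ to get $Y_2(z)$, obtains $N_2(z)$ by subtracting from the full-system generating function, and then performs the inductive step by collapsing the first $k-1$ symbols into two super-blocks with generating functions $N_{k-1}(z)$ and $Y_{k-1}(z)$, yielding the recursion \eqref{11:eq11} which is closed using the identities $g_k^{(1)}+g_k^{(2)}+g_k^{(3)}=\frac{p_kz}{1-p_kz}$ and $Y_{k-1}+N_{k-1}=\frac{z\sum_{i<k}p_i}{1-z\sum_{i<k}p_i}$. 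You instead give a direct combinatorial decomposition: prefix times $\prod_{i=1}^m g_i^{(3)}$, with the only maximality constraint (the run of $1$'s must not be extendable to the left) absorbed into the admissible-prefix factor $\frac{1-p_1z}{1-z\sum_i p_i}$; the runs of symbols $2,\ldots,m$ are automatically maximal because they are flanked by different symbols or the end of the sequence, which is exactly the convention the paper's interaction matrix encodes. Your computation of $S_j(z)=\frac{p_jz}{1-z\sum_i p_i}$ and the resulting cancellation are correct, and your complementation identity $Y_m+N_m=\frac{z\sum_i p_i}{1-z\sum_i p_i}$ is the same one the paper uses to define $N_m$. What each approach buys: yours is shorter, avoids matrix inversion and induction entirely, and makes transparent why the factor $\prod_{i=2}^m(1-p_iz)$ (rather than $\prod_{i=1}^m$) survives in the denominator; the paper's is more mechanical and reuses verbatim the machinery needed again in Theorem \ref{11:th3}, where $Y_{m}$ and $N_{m}$ serve as building blocks for the full double generating function.
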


\begin{proof}
    We prove the result using induction of $m$. For $m=2$, we have $\boldsymbol{e}=[1~1~1~1~1~1~1]$,
    \begin{align*}
        \boldsymbol{M}:=\begin{pmatrix}
            1 & 0 & 0 & -g_1^{(1)} & -g_1^{(1)} & -g_1^{(1)} & -g_1^{(1)}\\
            0 & 1 & 0 & -g_1^{(2)} & -g_1^{(2)} & -g_1^{(2)} & -g_1^{(2)}\\
            0 & 0 & 1 & -g_1^{(3)} & -g_1^{(3)} & -g_1^{(3)} & -g_1^{(3)}u\\
            -g_2^{(1)} & -g_2^{(1)} & -g_2^{(1)} & 1 & 0 & 0 & 0\\
            -g_2^{(2)} & -g_2^{(2)} & -g_2^{(2)} & 0 & 1 & 0 & 0 \\
            -g_2^{(3)} & -g_2^{(3)} & -g_2^{(3)} & 0 & 0 & 0 & 1\\
            0 & 0 & 0 & 0 & 0 & 0 & 1\\
        \end{pmatrix},
    \end{align*}
    and $\boldsymbol{g}=[g_1^{(1)}~g_1^{(2)}~g_1^{(3)}~g_2^{(1)}~g_2^{(2)}~g_2^{(3)}~g_2^{(3)}]^T$. The matrix $\boldsymbol{M}$ is generated using a similar concept applied in Theorem \ref{11:th1}. Here, we introduce a special block $E_{\ell_3,k_3}$ that appears to the right of the sequence. It is important to note that this block has zero interaction with any other blocks. The variable $u$ is used to track the pattern $E_{\ell_1,k_1}$ from the left and $E_{\ell_2,k_2}$ from the right, with the additional requirement of the block at the right of the sequence. Computing $\boldsymbol{e}\boldsymbol{M}^{-1}\boldsymbol{g}$ will provide the generating function in terms of a polynomial in $u$, where the coefficient of $u$ yields to $Y_2(m)$. For more details, see Lemma 1 of Kong \cite{Kong2001}. With some routine calculations, the coefficient of $u$ in $\boldsymbol{e}\boldsymbol{M}^{-1}\boldsymbol{g}$ can be expressed as
     \begin{align*}
       Y_2(z)=\frac{\prod_{i=1}^{2}((p_i z)^{\ell_i}-(p_i z)^{k_i+1})}{\left(1-z \right)(1-p_2 z)}.
   \end{align*}
   The complement generating function $N_2(z)$ can be computed by subtracting $Y_2(z)$ from the full system. It is expressed as
   \begin{align*}
       N_2(z)=\frac{(p_1+p_2)z}{1-(p_1+p_2)z}-Y_2(z)=\frac{z(p_1+p_2)(1-p_2 z)-\prod_{i=1}^{m}((p_i z)^{\ell_i}-(p_i z)^{k_i+1})}{\left(1-z (p_1+p_2)\right)(1-p_2 z)}.
   \end{align*}
   Hence, the result is true for $m=2$. Assuming the results hold for $m=k-1$, let us consider $\boldsymbol{e}=[1~1~1~1~1~1]$,
   \begin{align*}
        \boldsymbol{M}:=\begin{pmatrix}
            1 & 0 & -N_{k-1}(z) & -N_{k-1}(z) & -N_{k-1}(z)  & -N_{k-1}(z)\\
            0 & 1 & -Y_{k-1}(z) & -Y_{k-1}(z) & -Y_{k-1}(z) & -Y_{k-1}(z)u\\
            -g_k^{(1)} & -g_k^{(1)} & 1 & 0 & 0 & 0\\
            -g_k^{(2)} & -g_k^{(2)} & 0 & 1 & 0 & 0 \\
            -g_k^{(3)} & -g_k^{(3)} & 0 & 0 & 1 & 0\\
            0 & 0 & 0 & 0 & 0 & 1\\
        \end{pmatrix},
    \end{align*}
    and $\boldsymbol{g}=\left[N_{k-1}(z)~~Y_{k-1}(z)~~g_k^{(1)}~~g_k^{(2)}~~g_k^{(3)}~~g_k^{(3)}\right]^T$. Following the steps similar to the standard case $m=2$, the coefficient of $u$ is given by
    \begin{align}
        Y_k(z)=\frac{g_k^{(3)} Y_{k-1}\left(1+g_{k}^{(1)}+g_{k}^{(2)}+g_{k}^{(3)}\right)}{1-\left(g_{k}^{(1)}+g_{k}^{(2)}+g_{k}^{(3)}\right)(Y_{k-1}(z)+N_{k-1}(z))}.\label{11:eq11}
    \end{align}
    The result can be verified for $m=k$ by substituting the following identities in \eqref{11:eq11}
    \begin{align*}
        g_{k}^{(1)}+g_{k}^{(2)}+g_{k}^{(3)}=\frac{p_k z}{1-p_k z}\quad \text{and}\quad Y_{k-1}(z)+N_{k-1}(z)=\frac{(p_1+p_2+\cdots+p_{k-1})z}{1-(p_1+p_2+\cdots+p_{k-1})z}.
    \end{align*}
    Hence, the results hold for any $m\ge 2$.
\end{proof}

\noindent
The following corollary can be easily verified by taking $k_i\to \infty$, for $i=1,2,\ldots,m$.

\begin{corollary}
     The generating function for the probabilities of the pattern $E_{\ell_1}\prec E_{\ell_2}\prec \ldots \prec E_{\ell_m}$ appearing at the right end of the sequence is given by
   \begin{align*}
       Y_m(z)=\frac{\prod_{i=1}^{m}(p_i z)^{\ell_i}}{\left(1-z\sum_{i=1}^{m}p_i \right)\prod_{i=2}^{m}(1-p_i z)}.
   \end{align*}
  The generating function for the complement of patterns that do not contain such a sequence is given by
   \begin{align*}
       N_m(z)=\frac{z\left(\sum_{i=1}^{m}p_i\right)\prod_{i=2}^{m}(1-p_i z)-\prod_{i=1}^{m}(p_i z)^{\ell_i}}{\left(1-z \sum_{i=1}^{m}p_i\right)\prod_{i=2}^{m}(1-p_i z)}.
   \end{align*}
\end{corollary}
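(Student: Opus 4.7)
The plan is to derive this corollary as a direct specialization of Theorem \ref{11:th2} by passing to the limit $k_i\to\infty$ for each $i=1,2,\ldots,m$. The starting point is the standing assumption $p_i z<1$ maintained throughout the paper, which ensures that $(p_i z)^{k_i+1}\to 0$ as $k_i\to\infty$. Consequently, each numerator factor $(p_i z)^{\ell_i}-(p_i z)^{k_i+1}$ in the formula for $Y_m(z)$ converges to $(p_i z)^{\ell_i}$, while the denominator $\left(1-z\sum_{i=1}^{m}p_i\right)\prod_{i=2}^{m}(1-p_i z)$ is independent of the $k_i$'s and thus remains unchanged. Substituting termwise yields the first displayed formula.

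The same argument handles $N_m(z)$: in the numerator given by Theorem \ref{11:th2}, the summand $z\left(\sum_{i=1}^{m}p_i\right)\prod_{i=2}^{m}(1-p_i z)$ is already free of the $k_i$'s, whereas $\prod_{i=1}^{m}((p_i z)^{\ell_i}-(p_i z)^{k_i+1})$ tends to $\prod_{i=1}^{m}(p_i z)^{\ell_i}$. The denominator is again unaffected, so the claimed formula for $N_m(z)$ drops out.

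Conceptually, this passage to the limit is legitimate for precisely the reason recorded in the introduction: the block $E_{\ell_i,k_i}$ reduces to $E_{\ell_i}$ as $k_i\to\infty$, so the compound pattern $E_{\ell_1,k_1}\prec\cdots\prec E_{\ell_m,k_m}$ degenerates to $E_{\ell_1}\prec\cdots\prec E_{\ell_m}$, and the corresponding generating functions must coincide in the limit. I anticipate no substantive obstacle in this proof; the only bookkeeping required is termwise convergence of rational functions, which is immediate from $|p_i z|<1$. An alternative route would be to redo the inductive construction of Theorem \ref{11:th2} using the single block $g_i=\sum_{j\geq \ell_i}(p_i z)^j=(p_i z)^{\ell_i}/(1-p_i z)$ in place of the three-block decomposition $g_i^{(1)},g_i^{(2)},g_i^{(3)}$, but this would merely duplicate the earlier induction without conceptual gain.
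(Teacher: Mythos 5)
Your proposal is correct and matches the paper's own derivation: the paper obtains this corollary exactly by letting $k_i\to\infty$ in Theorem \ref{11:th2}, using $p_iz<1$ so that $(p_iz)^{k_i+1}\to 0$ in each factor. No further comment is needed.
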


\begin{remark}
    Note that the generating functions provided in \eqref{11:eq12} coincide with Lemma 1 of Kong \cite{Kong2021} for the pattern $E_{\ell_1}\prec E_{\ell_2}\prec \ldots \prec E_{\ell_m}$.
\end{remark}

\noindent
Finally, we are at a stage where we can derive our main result by using Theorems \ref{11:th1} and \ref{11:th2}. The following theorem provide the generating function for the pattern $E_{\ell_1,k_1}\prec E_{\ell_2,k_2}\prec \ldots \prec E_{\ell_m,k_m}$.

\begin{theorem}\label{11:th3}
    The double generating function for the probabilities of the run \textbf{(R)} is given by
    \begin{align*}
        \Phi_m(w,z)=\frac{\prod_{i=2}^{m-1}(1-p_i z)}{(1-z)\prod_{i=2}^{m-1}(1-p_iz)-(w-1)\prod_{i=1}^{m}((p_i z)^{\ell_i}-(p_i z)^{k_i+1})}.
    \end{align*}
\end{theorem}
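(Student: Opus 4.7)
The plan is to deploy the generating-function block method of \eqref{11:eq1} a third time, now nesting Theorem~\ref{11:th2} inside the construction used for Theorem~\ref{11:th1}. First I would partition every realization of the underlying sequence into an alternating concatenation of \emph{A-blocks} (maximal non-empty runs drawn from $\{1,2,\ldots,m-1\}$) and \emph{B-blocks} (maximal non-empty runs of $m$'s). Each A-block is classified into exactly one of two mutually exclusive types: it either terminates with the subpattern $E_{\ell_1,k_1}\prec\cdots\prec E_{\ell_{m-1},k_{m-1}}$ (generating function $Y_{m-1}(z)$ from Theorem~\ref{11:th2}), or it does not (generating function $N_{m-1}(z)$). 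Each B-block splits by length into the three classes $g_m^{(1)}, g_m^{(2)}, g_m^{(3)}$ already introduced. The key combinatorial observation is that an occurrence of the run \textbf{(R)} corresponds precisely to a $Y_{m-1}$-type A-block immediately followed by a $g_m^{(3)}$-type B-block, so the tracking variable $w$ needs to enter in just that single interaction.

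Second, I would write down the $5\times 5$ matrix $\boldsymbol{M}$ indexed by the five block-types $(N_{m-1},Y_{m-1},g_m^{(1)},g_m^{(2)},g_m^{(3)})$, placing zeros at the forbidden A--A and B--B adjacencies, putting $-Y_{m-1}(z)\,w$ at the $(Y_{m-1},g_m^{(3)})$ entry, and plain block-transition factors at the remaining off-diagonal positions, in exact analogy with the matrix in Theorem~\ref{11:th1}. Then $\Phi_m(w,z) = 1 + \boldsymbol{e}\boldsymbol{M}^{-1}\boldsymbol{g}$ with $\boldsymbol{e}=(1,1,1,1,1)$ and $\boldsymbol{g}=(N_{m-1},Y_{m-1},g_m^{(1)},g_m^{(2)},g_m^{(3)})^{T}$.

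The principal obstacle is the algebraic simplification of this inverse. Rather than expand the $5\times 5$ determinant directly, I would exploit the rank-one structure of both off-diagonal blocks (away from the $w$-perturbed entry) by aggregating $\alpha:=Y_{m-1}+N_{m-1}$ and $\beta:=g_m^{(1)}+g_m^{(2)}+g_m^{(3)}$. A short reduction then shows that the denominator of $\Phi_m$ collapses to $1-\alpha\beta-(w-1)\,Y_{m-1}(z)\,g_m^{(3)}$ and the numerator to $(1+\alpha)(1+\beta)$. Two closed-form identities finish the argument: direct geometric summation yields $\beta = p_mz/(1-p_mz)$, while combining $Y_{m-1}(z)+N_{m-1}(z)$ from Theorem~\ref{11:th2} with $\sum_{i=1}^{m}p_i = 1$ gives $\alpha = (1-p_m)z/(1-(1-p_m)z)$. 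From these one reads off the simplifications $(1+\alpha)(1+\beta) = 1/[(1-(1-p_m)z)(1-p_mz)]$ and $1-\alpha\beta = (1-z)/[(1-(1-p_m)z)(1-p_mz)]$, so that the outer factors $(1-(1-p_m)z)(1-p_mz)$ cancel uniformly. Substituting the explicit form of $Y_{m-1}(z)\,g_m^{(3)}$ from Theorem~\ref{11:th2} then produces the claimed denominator $(1-z)\prod_{i=2}^{m-1}(1-p_iz)-(w-1)\prod_{i=1}^{m}\bigl((p_iz)^{\ell_i}-(p_iz)^{k_i+1}\bigr)$ and numerator $\prod_{i=2}^{m-1}(1-p_iz)$.
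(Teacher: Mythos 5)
Your proposal is correct and follows essentially the same route as the paper: the paper's proof of Theorem~\ref{11:th3} likewise builds the $5\times 5$ system on the block types $(N_{m-1},Y_{m-1},g_m^{(1)},g_m^{(2)},g_m^{(3)})$ with the tracking variable $w$ placed at the single $(Y_{m-1},g_m^{(3)})$ interaction, and invokes Theorem~\ref{11:th2} for the closed forms of $Y_{m-1}$ and $N_{m-1}$. The only differences are presentational: the paper wraps the step in an induction on $m$ (though only the closed forms from Theorem~\ref{11:th2} are actually used, as you correctly observe), and your aggregation via $\alpha=Y_{m-1}+N_{m-1}$, $\beta=g_m^{(1)}+g_m^{(2)}+g_m^{(3)}$ with the identities $1-\alpha\beta=(1-z)/[(1-(1-p_m)z)(1-p_mz)]$ and $(1+\alpha)(1+\beta)=1/[(1-(1-p_m)z)(1-p_mz)]$ supplies exactly the simplification the paper leaves as ``routine calculations.''
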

\begin{proof}
    We prove the result using induction of $m$. We have already proved the result for $m=2$ in Theorem \ref{11:th1}. For $m=3$, let us consider $\boldsymbol{e}=[1~1~1~1~1]$, 
    \begin{align*}
        \boldsymbol{M}:=\begin{pmatrix}
            1 & 0 & -N_{2}(z) & -N_{2}(z) & -N_{2}(z)\\
            0 & 1 & -Y_{2}(z) & -Y_{2}(z) & -Y_{2}(z)w\\
            -g_3^{(1)} & -g_3^{(1)} & 1 & 0 & 0 \\
            -g_3^{(2)} & -g_3^{(2)} & 0 & 1 & 0  \\
            -g_3^{(3)} & -g_3^{(3)} & 0 & 0 & 1 \\
        \end{pmatrix},
    \end{align*}
    and $\boldsymbol{g}=\left[N_{2}(z)~~Y_{2}(z)~~g_3^{(1)}~~g_3^{(2)}~~g_3^{(3)}\right]^T$. Following the steps similar to Theorem \ref{11:th1}, we get 
    \begin{align*}
        \Phi_3(w,z)=\frac{(1-p_2 z)}{(1-z)(1-p_2z)-(w-1)\prod_{i=1}^{3}((p_i z)^{\ell_i}-(p_i z)^{k_i+1})}.
    \end{align*}
    Thus, the result is true for $m=3$. Assume the result holds for $m=k-1$. Now, consider $\boldsymbol{e}=[1~1~1~1~1]$, 
    \begin{align*}
        \boldsymbol{M}:=\begin{pmatrix}
            1 & 0 & -N_{k-1}(z) & -N_{k-1}(z) & -N_{k-1}(z)\\
            0 & 1 & -Y_{k-1}(z) & -Y_{k-1}(z) & -Y_{k-1}(z)w\\
            -g_k^{(1)} & -g_k^{(1)} & 1 & 0 & 0 \\
            -g_k^{(2)} & -g_k^{(2)} & 0 & 1 & 0  \\
            -g_k^{(3)} & -g_k^{(3)} & 0 & 0 & 1 \\
        \end{pmatrix},
    \end{align*}
    and $\boldsymbol{g}=\left[N_{k-1}(z)~~Y_{k-1}(z)~~g_k^{(1)}~~g_k^{(2)}~~g_k^{(3)}\right]^T$. Substituting $N_{k-1}(z)$ and $Y_{k-1}(z)$ from Theorem \ref{11:th2} and following the steps similar as above, the result follows for $m=k$. \\
    Hence, the result holds for any $m\ge 2$.
\end{proof}

\noindent
The following corollary can be easily verified by letting $k_i\to \infty$, for $i=1,2,\ldots,m$ in Theorem \ref{11:th3}.
\begin{corollary}\label{11:cor1}
    The double generating function for the probabilities of the run $E_{\ell_1}\prec E_{\ell_2}\prec \cdots \prec E_{\ell_m}$ is given by
    \begin{align}
        \Phi_m(w,z)=\frac{\prod_{i=2}^{m-1}(1-p_i z)}{(1-z)\prod_{i=2}^{m-1}(1-p_iz)-(w-1)\prod_{i=1}^{m}(p_i z)^{\ell_i}}.\label{11:eq21}
    \end{align}
\end{corollary}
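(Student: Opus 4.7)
The plan is to obtain this corollary as the direct $k_i \to \infty$ limit of the formula proved in Theorem \ref{11:th3}. The conceptual justification is already in place: as noted in Section \ref{11:sec1}, $E_{\ell_i,k_i} \to E_{\ell_i}$ as $k_i \to \infty$, so the run \textbf{(R)} degenerates to $E_{\ell_1} \prec E_{\ell_2} \prec \cdots \prec E_{\ell_m}$. Consequently, the double generating function of the latter pattern should coincide with the limit of the double generating function of the former, and it suffices to pass to the limit in the closed-form expression from Theorem \ref{11:th3} rather than redo any block decomposition.

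At the level of that closed form, the only $k_i$ appearances are through the terms $(p_i z)^{k_i+1}$ sitting inside the product $\prod_{i=1}^{m}\bigl((p_i z)^{\ell_i}-(p_i z)^{k_i+1}\bigr)$ in the denominator; the factor $\prod_{i=2}^{m-1}(1-p_iz)$ in the numerator and the $(1-z)\prod_{i=2}^{m-1}(1-p_iz)$ piece in the denominator are independent of any $k_i$. Under the standing assumption $p_i z < 1$, each $(p_i z)^{k_i+1}\to 0$ as $k_i\to\infty$, so the product in question converges to $\prod_{i=1}^{m}(p_i z)^{\ell_i}$. Substituting this back into the formula from Theorem \ref{11:th3} yields exactly the expression claimed in \eqref{11:eq21}.

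I do not expect any serious obstacle here: both sides are rational in the quantities $(p_iz)^{k_i+1}$, so passage to the limit reduces to continuity of a rational expression away from the zeros of its denominator. The only item worth a short sanity check is that the denominator of $\Phi_m(w,z)$ does not vanish during the limit, which is automatic on the common domain fixed by $p_iz<1$ for all $i$ together with $|w|$ sufficiently small; in that regime the limit can be taken factor-by-factor in the product, and the corollary follows immediately.
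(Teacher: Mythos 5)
Your proposal matches the paper's own derivation: the corollary is obtained precisely by letting $k_i \to \infty$ in Theorem \ref{11:th3}, using $p_i z < 1$ so that each $(p_i z)^{k_i+1} \to 0$ and the product in the denominator reduces to $\prod_{i=1}^{m}(p_i z)^{\ell_i}$. Your additional remarks on continuity of the rational expression and non-vanishing of the denominator are a harmless elaboration of what the paper leaves implicit.
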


\begin{remark}
    Observe that the generating functions provided in \eqref{11:eq21} coincide with Theorem 1 of Kong \cite{Kong2021} for the pattern $E_{\ell_1}\prec E_{\ell_2}\prec \ldots \prec E_{\ell_m}$.
\end{remark}

\noindent
As special cases, the following corollary can be proved by taking $\ell_i=k_i$ and $\ell_i=1$, for $i=1,2,\ldots,m$ in Theorem \ref{11:th3}.
\begin{corollary}\label{11:cor2}
\begin{itemize}
    \item[(i)] The double generating function for the probabilities of the run $F_{k_1}\prec F_{k_2}\prec \cdots \prec F_{k_m}$ is given by
    \begin{align*}
        \Phi_m(w,z)=\frac{\prod_{i=2}^{m-1}(1-p_i z)}{(1-z)\prod_{i=2}^{m-1}(1-p_iz)-(w-1)\prod_{i=1}^{m}(p_i z)^{\ell_i}(1-p_i z)}.
    \end{align*}
    \item[(ii)] The double generating function for the probabilities of the run $G_{k_1}\prec G_{k_2}\prec \cdots \prec G_{k_m}$ is given by
    \begin{align*}
      \Phi_m(w,z)=\frac{\prod_{i=2}^{m-1}(1-p_i z)}{(1-z)\prod_{i=2}^{m-1}(1-p_iz)-(w-1)\prod_{i=1}^{m}(p_i z)(1-(p_i z)^{k_i})}.
    \end{align*}
    \end{itemize}
\end{corollary}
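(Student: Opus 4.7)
The plan is to obtain both formulas as direct specializations of Theorem \ref{11:th3}. As noted in Section \ref{11:sec1}, the run $E_{\ell_i,k_i}$ collapses to $F_{k_i}$ when $\ell_i=k_i$, and to $G_{k_i}$ when $\ell_i=1$. Consequently, the run $F_{k_1}\prec F_{k_2}\prec \cdots \prec F_{k_m}$ is precisely the run \textbf{(R)} with $\ell_i=k_i$ for every $i$, and the run $G_{k_1}\prec G_{k_2}\prec \cdots \prec G_{k_m}$ is \textbf{(R)} with $\ell_i=1$ for every $i$. So the entire argument reduces to substituting these values into the formula of Theorem \ref{11:th3} and simplifying the factor $(p_i z)^{\ell_i}-(p_i z)^{k_i+1}$ in each case.

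For part (i), substituting $\ell_i=k_i$ yields $(p_iz)^{k_i}-(p_iz)^{k_i+1}=(p_iz)^{k_i}(1-p_iz)$. Taking the product over $i=1,\ldots,m$ and plugging into the denominator of Theorem \ref{11:th3} gives the claimed expression. For part (ii), setting $\ell_i=1$ gives $p_iz-(p_iz)^{k_i+1}=(p_iz)\bigl(1-(p_iz)^{k_i}\bigr)$, and the analogous substitution produces the second formula.

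Since both parts follow by direct substitution and elementary factorization of a geometric-type difference, there is no substantive obstacle; one only has to check that the numerator $\prod_{i=2}^{m-1}(1-p_iz)$ and the leading term $(1-z)\prod_{i=2}^{m-1}(1-p_iz)$ of the denominator are unaffected by the substitution, which is immediate as they do not depend on $\ell_i$ or $k_i$. No additional induction or reconstruction of the interaction matrix is required, so the whole corollary is essentially a one-line specialization of the main theorem.
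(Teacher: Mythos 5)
Your proof is correct and follows exactly the route the paper intends: the paper itself presents this corollary as the direct specialization $\ell_i=k_i$ (resp.\ $\ell_i=1$) of Theorem \ref{11:th3}, with the same factorizations $(p_iz)^{k_i}-(p_iz)^{k_i+1}=(p_iz)^{k_i}(1-p_iz)$ and $p_iz-(p_iz)^{k_i+1}=(p_iz)(1-(p_iz)^{k_i})$. The only discrepancy is a typo in the paper's statement of part (i), where the exponent is written $\ell_i$ rather than $k_i$; under the substitution $\ell_i=k_i$ these coincide, so your derivation matches.
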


\begin{remark}
\begin{itemize}
\item[(i)] Corollary \ref{11:cor1} and Corollary \ref{11:cor2} extend the patterns previously considered by Dafnis et al. \cite{DAP} (for two-state trials) to multi-state trials.
\item[(ii)] Using a similar argument as previously discussed ($k_i\to \infty$, $\ell_i=k_i$, and $\ell_i=1$, for $i\subseteq\{1,2,\ldots,m\}$), we can derive the generating function for various types of $(k_1,k_2,\ldots,k_m)$-runs with all cases of at least, exactly, and at most occurrences.
\end{itemize}
\end{remark}

\noindent
Next, let $\alpha_j$ denote the coefficient of $z^j$ in $\prod_{i=2}^{m-1}(1-p_iz)$, which is easy to compute given the value of $m$. It is important to note that $\alpha_0=1$. Therefore, we have
\begin{align}
    \prod_{i=2}^{m-1}(1-p_iz)=1+\sum_{j=1}^{m-2}\alpha_j z^j.\label{11:41}
\end{align}
Next, using Theorem \ref{11:th3} and the definition of double generating function given in \eqref{11:dgf}, the following corollary can be easily verified.

\begin{corollary}\label{11:cor3}
    The recursive relation in the probability generation function $\phi_{m,n}(\cdot)$ is given by
    \begin{align*}
        \phi_{m,n}(w)&+\sum_{j=1}^{m-2}(\alpha_j-\alpha_{j-1})\phi_{m,n-j}(w)-\alpha_{m-2} \phi_{m,n-m+1}(w)\\
        &=(w-1)\left(\prod_{i=1}^{m}p_i^{\ell_i}\right)\phi_{m,n-\ell}(w)-(w-1)\left(\prod_{i=1}^{m}p_i^{k_i+1}\right)\phi_{m,n-k-m}(w), \quad \text{for }n\ge \ell
    \end{align*}
    with initial condition $\phi_{m,n}(w) = 1$, for $n < \ell$ where $\ell:=\ell_1+\ell_2+\cdots+\ell_m$, $k:=k_1+k_2+\cdots+k_m$.
\end{corollary}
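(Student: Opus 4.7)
The plan is to obtain the recursion by clearing the denominator in the formula of Theorem \ref{11:th3} and equating coefficients of $z^n$. Rewriting Theorem \ref{11:th3} as the polynomial identity
\begin{equation*}
\Phi_m(w,z)\left[(1-z)\prod_{i=2}^{m-1}(1-p_iz)-(w-1)\prod_{i=1}^{m}\!\bigl((p_iz)^{\ell_i}-(p_iz)^{k_i+1}\bigr)\right]=\prod_{i=2}^{m-1}(1-p_iz),
\end{equation*}
substituting $\Phi_m(w,z)=\sum_n\phi_{m,n}(w)z^n$ from \eqref{11:dgf}, and matching the $z^n$-coefficient on each side will yield a linear recursion in the pgfs $\phi_{m,n}(w)$.

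The first step is to expand the two bracketed polynomials in powers of $z$. Using \eqref{11:41}, I would write $\prod_{i=2}^{m-1}(1-p_iz)=1+\sum_{j=1}^{m-2}\alpha_jz^j$, and then a short telescoping calculation, with the convention $\alpha_0:=1$, gives
\begin{equation*}
(1-z)\prod_{i=2}^{m-1}(1-p_iz)=1+\sum_{j=1}^{m-2}(\alpha_j-\alpha_{j-1})z^j-\alpha_{m-2}z^{m-1}.
\end{equation*}
For the $(w-1)$ bracket, the product $\prod_{i=1}^{m}\bigl((p_iz)^{\ell_i}-(p_iz)^{k_i+1}\bigr)$ has lowest-degree contribution $\bigl(\prod_{i=1}^{m}p_i^{\ell_i}\bigr)z^{\ell}$ (every factor selecting $(p_iz)^{\ell_i}$) and highest-degree contribution $(-1)^m\bigl(\prod_{i=1}^{m}p_i^{k_i+1}\bigr)z^{k+m}$ (every factor selecting $-(p_iz)^{k_i+1}$), which are exactly the two terms that appear on the right-hand side of the claimed recursion.

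Comparing coefficients of $z^n$ on the two sides of the functional equation for $n\ge\ell$ then gives the recursion. Since $\ell=\sum_i\ell_i\ge m>m-2$, the right-hand polynomial $\prod_{i=2}^{m-1}(1-p_iz)$ has no contribution at $z^n$ for such $n$, and the matching reduces to the identity claimed in Corollary \ref{11:cor3}. The initial condition $\phi_{m,n}(w)=1$ for $n<\ell$ follows directly from the definition: a sequence of length below $\ell=\ell_1+\cdots+\ell_m$ cannot accommodate the pattern \textbf{(R)}, so $X_{m;\ell_1:k_1,\ldots,\ell_m:k_m}^{(n)}\equiv 0$ and its pgf equals $1$. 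The only genuine difficulty here is bookkeeping---tracking the index shifts $n-j$, $n-\ell$, and $n-k-m$ through coefficient extraction---which becomes routine once the two polynomial expansions above are in hand.
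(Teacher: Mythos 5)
Your overall strategy --- clearing the denominator in Theorem \ref{11:th3}, expanding $(1-z)\prod_{i=2}^{m-1}(1-p_iz)$ via \eqref{11:41}, and matching coefficients of $z^n$ --- is exactly the route the paper intends (the paper offers no written proof beyond asserting that the corollary ``can be easily verified'' from Theorem \ref{11:th3} and \eqref{11:dgf}), and your treatment of the left-hand side and of the initial condition is correct. The gap is in your handling of the $(w-1)$ bracket. The product $\prod_{i=1}^{m}\bigl((p_iz)^{\ell_i}-(p_iz)^{k_i+1}\bigr)$ expands into $2^m$ monomials, one for each subset $S\subseteq\{1,\dots,m\}$ of factors from which $-(p_iz)^{k_i+1}$ is selected; these monomials generically have distinct degrees and do not cancel. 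You record only the two extreme ones ($S=\emptyset$ and $S=\{1,\dots,m\}$) and assert that they ``are exactly the two terms that appear on the right-hand side of the claimed recursion.'' That observation is true but does not finish the coefficient comparison: each of the remaining $2^m-2$ cross terms contributes a shifted term $(w-1)(-1)^{|S|}\bigl(\prod_{i\notin S}p_i^{\ell_i}\bigr)\bigl(\prod_{i\in S}p_i^{k_i+1}\bigr)\phi_{m,\,n-d_S}(w)$ with $d_S=\sum_{i\notin S}\ell_i+\sum_{i\in S}(k_i+1)$, and these do not appear in the stated two-term recursion. Already for $m=2$ the expansion has four terms, the extra two being $-p_1^{\ell_1}p_2^{k_2+1}\phi_{2,n-\ell_1-k_2-1}(w)$ and $-p_1^{k_1+1}p_2^{\ell_2}\phi_{2,n-k_1-\ell_2-1}(w)$. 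There is also a sign you note but do not reconcile: the top-degree term carries the factor $(-1)^m$, which agrees with the minus sign in the statement only when $m$ is odd.

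So the step ``the matching reduces to the identity claimed in Corollary \ref{11:cor3}'' fails as written: an honest coefficient extraction yields a recursion with all $2^m$ shifted terms on the right-hand side, not two. Your proof should either carry all of these terms or explain why they may be dropped --- and they cannot be, as a direct check of the case $m=2$, $\ell_i=k_i=1$ against Theorem \ref{11:th1} shows. The method is the right one (and the paper's own), but the conclusion you assert is not what the method delivers.
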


\noindent
Using Corollary \ref{11:cor3}, we can easily derive the following result.
\begin{corollary}
    The recursive relation in the probability mas function $p_{m,n}(\cdot)$ is given by
    \begin{align*}
       p_{m,n}(s)&+\sum_{j=1}^{m-2}(\alpha_j-\alpha_{j-1})p_{m,n-j}(s)-\alpha_{m-2} p_{m,n-m+1}(s)\\
        &=\left(\prod_{i=1}^{m}p_i^{\ell_i}\right)(p_{m,n-\ell}(s-1)-p_{m,n-\ell}(s))-\left(\prod_{i=1}^{m}p_i^{k_i+1}\right)(p_{m,n-k-m}(s-1)-p_{m,n-k-m}(s)),
    \end{align*}
     for $s\ge 0$ and $n\ge \ell$ with initial condition $p_{m,0}(0) = 1$ and $p_{m,n}(s)=0$ for $s>0$ and $n < \ell$.
\end{corollary}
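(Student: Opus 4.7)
The plan is to obtain the pmf recursion by simply extracting the coefficient of $w^s$ from both sides of the pgf recursion in Corollary \ref{11:cor3}. Since $\phi_{m,n}(w)=\sum_{s\ge 0}p_{m,n}(s)w^{s}$ by definition, the operation $[w^s]\phi_{m,n}(w)=p_{m,n}(s)$ is linear, so applying it termwise on the left-hand side of the identity in Corollary \ref{11:cor3} already produces the left-hand side of the claimed recurrence, namely
\begin{align*}
    p_{m,n}(s)+\sum_{j=1}^{m-2}(\alpha_j-\alpha_{j-1})\,p_{m,n-j}(s)-\alpha_{m-2}\,p_{m,n-m+1}(s).
\end{align*}

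For the right-hand side, I would use the elementary identity that for any power series $\phi(w)=\sum_{j\ge 0}a_j w^j$ one has $(w-1)\phi(w)=\sum_{j\ge 0}(a_{j-1}-a_j)w^{j}$, with the convention $a_{-1}=0$. Applying this to $\phi_{m,n-\ell}(w)$ and $\phi_{m,n-k-m}(w)$ yields
\begin{align*}
    [w^s]\bigl((w-1)\phi_{m,n-\ell}(w)\bigr)&=p_{m,n-\ell}(s-1)-p_{m,n-\ell}(s),\\
    [w^s]\bigl((w-1)\phi_{m,n-k-m}(w)\bigr)&=p_{m,n-k-m}(s-1)-p_{m,n-k-m}(s),
\end{align*}
and multiplying by the constants $\prod_{i=1}^m p_i^{\ell_i}$ and $\prod_{i=1}^m p_i^{k_i+1}$ respectively reproduces the right-hand side of the desired identity. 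Under the convention $p_{m,n}(-1)=0$, the statement is valid for all $s\ge 0$.

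It only remains to justify the initial conditions. Since the run \textbf{(R)} contains at least $\ell=\ell_1+\cdots+\ell_m$ symbols, no occurrence is possible in a sequence shorter than $\ell$, so $X^{(n)}_{m;\ell_1:k_1,\ldots,\ell_m:k_m}\equiv 0$ and hence $\phi_{m,n}(w)=1$ for $n<\ell$; extracting coefficients gives $p_{m,n}(0)=1$ and $p_{m,n}(s)=0$ for $s>0$ when $n<\ell$, and in particular $p_{m,0}(0)=1$. The only point requiring a little care, and arguably the main (though very mild) obstacle, is bookkeeping of the $p_{m,n-\ell}(s-1)$ term at $s=0$, which is absorbed by the convention $p_{m,n}(-1)=0$; otherwise the derivation is entirely routine coefficient extraction.
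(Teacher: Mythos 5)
Your proposal is correct and follows exactly the route the paper intends: the paper derives this corollary by extracting the coefficient of $w^s$ from the pgf recursion of Corollary \ref{11:cor3}, which is precisely your coefficient-extraction argument together with the convention $p_{m,n}(-1)=0$. The justification of the initial conditions is also the standard one, so there is nothing to add.
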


\noindent
Next, let $\mu_{n,m,r}=\mathbb{E}\big(\big(X_{m;\ell_1:k_1,\ell_2:k_2,\ldots,\ell_m:k_m}^{(n)}\big)^r\big)$, the $r$th non-central moment of $X_{m;\ell_1:k_1,\ell_2:k_2,\ldots,\ell_m:k_m}^{(n)}$. The following result provide the recursive relation in $\mu_{n,m,r}$.

\begin{corollary}
    The moments $\mu_{n,m,r}$ satisfies the following recursive relation
    \begin{align*}
       \mu_{m,n,r}&+\sum_{j=1}^{m-2}(\alpha_j-\alpha_{j-1})\mu_{m,n-j,r}-\alpha_{m-2} \mu_{m,n-m+1,r}\\
        &=\left(\prod_{i=1}^{m}p_i^{\ell_i}\right)\sum_{u=0}^{r-1}\binom{r}{u}\mu_{m,n-\ell,u}-\left(\prod_{i=1}^{m}p_i^{k_i+1}\right)\sum_{u=0}^{r-1}\binom{r}{u}\mu_{m,n-k-m,u}, \quad \text{for }r\ge 1 \text{ and }  n\ge \ell
    \end{align*}
    with initial condition $\mu_{m,n,0} = 1$ and $\mu_{m,n,r}=0$ for $r\ge 1$ and $n < \ell$.
\end{corollary}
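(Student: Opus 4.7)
The plan is to derive the moment recursion directly from the probability generating function recursion in Corollary 2.3 by making the substitution $w = e^t$, which converts the pgf into the moment generating function and lets one read off the non-central moments as Taylor coefficients. Specifically, since $\phi_{m,n}(w) = \mathbb{E}[w^{X_{m;\ldots}^{(n)}}]$, we have the formal expansion
\begin{align*}
    \phi_{m,n}(e^t) = \sum_{r=0}^{\infty} \mu_{m,n,r}\,\frac{t^r}{r!},
\end{align*}
and one also has $e^t - 1 = \sum_{s\ge 1} t^s/s!$. Substituting $w = e^t$ in the identity of Corollary 2.3 yields an identity between two formal power series in $t$, and extracting the coefficient of $t^r/r!$ on both sides will produce the claimed recursion.

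On the left-hand side of Corollary 2.3 the coefficient of $t^r/r!$ is immediately $\mu_{m,n,r} + \sum_{j=1}^{m-2}(\alpha_j-\alpha_{j-1})\mu_{m,n-j,r}-\alpha_{m-2}\mu_{m,n-m+1,r}$, because the factors $\alpha_j - \alpha_{j-1}$ and $\alpha_{m-2}$ do not depend on $w$. On the right-hand side the key observation is that for each term $(e^t-1)\phi_{m,n-\ell}(e^t)$ (and similarly for the $n-k-m$ term), a Cauchy-product gives
\begin{align*}
    [t^r/r!]\,(e^t-1)\phi_{m,n-\ell}(e^t)
    = \sum_{\substack{s+u=r\\ s\ge 1}}\binom{r}{s,u}\mu_{m,n-\ell,u}
    = \sum_{u=0}^{r-1}\binom{r}{u}\mu_{m,n-\ell,u},
\end{align*}
which is exactly the sum appearing in the statement. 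Combining the two terms of the right-hand side and multiplying through by the corresponding products of $p_i^{\ell_i}$ and $p_i^{k_i+1}$ yields the stated recursion.

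The initial condition is inherited from the one in Corollary 2.3: for $n<\ell$ the random variable $X_{m;\ell_1:k_1,\ldots,\ell_m:k_m}^{(n)}$ is identically zero, so $\mu_{m,n,0}=1$ and $\mu_{m,n,r}=0$ for $r\ge 1$. I do not anticipate any real obstacle here; the only mildly delicate point is verifying that the $s=0$ term is absent on the right-hand side, but this is precisely why the factor $(e^t-1)$ rather than $e^t$ appears, which in turn traces back to the $(w-1)$ factor in Corollary 2.3. All other manipulations are routine formal-power-series identities.
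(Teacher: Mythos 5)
Your proposal is correct and is essentially the argument the paper intends: substituting $w=e^t$ into the pgf recursion of Corollary 2.3 and equating coefficients of $t^r/r!$, with the factor $(e^t-1)$ accounting for the truncation of the sum at $u=r-1$ via $\binom{r}{s,u}=\binom{r}{u}$. The identification of the initial conditions from the degeneracy of $X^{(n)}_{m;\ell_1:k_1,\ldots,\ell_m:k_m}$ for $n<\ell$ also matches what the paper asserts.
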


\begin{remark}
    Note that if we are interested in a specific pattern related to at least, exactly, and at most cases then we can get the pattern by applying $k_i\to \infty$, $\ell_i=k_i$, and $\ell_i=1$ for $i\subseteq \{1,2,\ldots,m\}$. Consequently, all the results mentioned above can be derived accordingly for that particular pattern. Here, observe that the notation $i\subseteq \{1,2,\ldots,m\}$ indicates that it corresponds to the pattern under consideration. For instance, if we are dealing the pattern $E_{\ell_1}\prec F_{\ell_2}\prec E_{\ell_3}\prec E_{\ell_4,k_4}\prec G_{k_5}$, then we set $k_1,k_3\to \infty$, $\ell_2=k_2$, and $\ell_5=1$ to derive the relevant results.
\end{remark}

\section{Waiting time Distribution of $\boldsymbol{X_{m;\ell_1:k_1,\ell_2:k_2,\ldots,\ell_m:k_m}^{(n)}}$}\label{11:sec3}
In this section, our focus lies on determining the waiting time distribution for the pattern \textbf{(R)}. We adopt a similar technique used by Chadjiconstantinidis and Eryilmaz \cite{CE2023} to derive the results. Recall that $\xi_1,\xi_2,\ldots,\xi_n$ is a sequence of \textit{iid} multinomial random variables, each taking values from the set $\{1,2,\ldots,m\}$, with probability $\mathbb{P}(\xi_i=j)=p_j$, for $i=1,2,\ldots,n$, $j=1,2,\ldots,m$, and $\sum_{j=1}^{m}p_j=1$. The random variable $X_{m;\ell_1:k_1,\ell_2:k_2,\ldots,\ell_m:k_m}^{(n)}$ denotes the number of occurrences of the run \textbf{(R)} with $p_{m,n}(s)=\mathbb{P}(X_{m;\ell_1:k_1,\ell_2:k_2,\ldots,\ell_m:k_m}^{(n)}=s)$. Further,  let $T_r$ denote the waiting time for the $r$th occurrence of the run \textbf{(R)}. Thus, we express $T_r$ as follows:
\begin{align}
    T_r=\sum_{i=1}^{r}W_i,\label{11:eq31}
\end{align}
where $W_1=T_1$ is the waiting time for the first occurrence of the pattern \textbf{(R)} and $W_i=T_i-T_{i-1}$, $i=2,3,\ldots,r$, are the inter-arrival time. Clearly, $T_r\ge r(\ell_1+\ell_2+\cdots+\ell_m)$. It is important to observe that since the underlying sequence consists of \textit{iid} multi-state random variables, $W_i$'s are also \textit{iid} random variables. It is easy to verified that
\begin{align*}
    \mathbb{P}(T_r>n)=\mathbb{P}(X_{m;\ell_1:k_1,\ell_2:k_2,\ldots,\ell_m:k_m}^{(n)}<r).
\end{align*}
However, computing waiting time probabilities using the above expression is practically challenging. Therefore, we employ double generating function to compute the probability generating function (pgf) of $T_r$. Let us define
\begin{align*}
    \psi_r(z):=\sum_{s=0}^{\infty} h_r(s)z^s=\sum_{s=0}^{\infty} \mathbb{P}(T_r=s)z^s,
\end{align*}
Following the steps similar to the proof of Lemma 1 of Chadjiconstantinidis and Eryilmaz \cite{CE2023}, the following lemma can be easily verified.

\begin{lemma}
    Let $\psi_1$ represent the pgf for the waiting time random variable $T_1$. Then
    \begin{align*}
        \psi_1(z)=1-(1-z)\Phi_m(0,z)=\frac{\prod_{i=1}^{m}((p_iz)^{\ell_i}-(p_i z)^{k_i+1})}{(1-z)\prod_{i=2}^{m-1}(1-p_i z)+\prod_{i=1}^{m}((p_iz)^{\ell_i}-(p_i z)^{k_i+1})}.
    \end{align*}
\end{lemma}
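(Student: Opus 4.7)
My plan is to derive the first equality from a tail-probability identity and then simply substitute $w=0$ into Theorem \ref{11:th3} to obtain the explicit rational form.

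The first step is to observe that the event $\{T_1 > n\}$ (the first occurrence of the run \textbf{(R)} has not yet appeared in the first $n$ trials) is equivalent to $\{X_{m;\ell_1:k_1,\ldots,\ell_m:k_m}^{(n)} = 0\}$. Hence $\mathbb{P}(T_1 > n) = p_{m,n}(0)$ for $n \ge 0$, and trivially $\mathbb{P}(T_1 > -1) = 1$. Since $T_1 \ge \ell_1+\cdots+\ell_m \ge 1$, one also has $\mathbb{P}(T_1 = 0) = 0$ and $\mathbb{P}(T_1 > 0) = 1$.

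Next, I write $\mathbb{P}(T_1 = n) = \mathbb{P}(T_1 > n-1) - \mathbb{P}(T_1 > n)$ for $n \ge 1$, multiply by $z^n$, and sum. Noting that $\Phi_m(0,z) = \sum_{n\ge 0} \phi_{m,n}(0) z^n = \sum_{n\ge 0} p_{m,n}(0) z^n$ (since $\phi_{m,n}(0)$ picks out the constant term in $w$), this manipulation yields
\begin{align*}
\psi_1(z) &= \sum_{n=1}^{\infty} \mathbb{P}(T_1 > n-1)z^n - \sum_{n=1}^{\infty} \mathbb{P}(T_1 > n) z^n \\
&= z\,\Phi_m(0,z) - \bigl(\Phi_m(0,z) - 1\bigr) = 1 - (1-z)\Phi_m(0,z),
\end{align*}
which is exactly the first claimed identity. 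This step mirrors the derivation in Chadjiconstantinidis and Eryilmaz \cite{CE2023}, as indicated in the statement.

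For the explicit form, I substitute $w=0$ into the expression for $\Phi_m(w,z)$ from Theorem \ref{11:th3}, obtaining
\begin{align*}
\Phi_m(0,z) = \frac{\prod_{i=2}^{m-1}(1-p_i z)}{(1-z)\prod_{i=2}^{m-1}(1-p_i z) + \prod_{i=1}^{m}\bigl((p_i z)^{\ell_i}-(p_i z)^{k_i+1}\bigr)},
\end{align*}
and then combining $1 - (1-z)\Phi_m(0,z)$ over a common denominator cancels the $(1-z)\prod_{i=2}^{m-1}(1-p_i z)$ term in the numerator, leaving precisely the stated closed form. The manipulations are routine; the only conceptual content is the tail-probability identity. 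The main (minor) obstacle is just handling the boundary terms $n=0$ and $n=-1$ correctly when passing from tail probabilities to the pgf, so that the factor $1$ appears correctly and the sign conventions are consistent.
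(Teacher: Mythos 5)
Your proposal is correct and follows essentially the same route the paper intends: the paper's ``proof'' is just a pointer to Lemma 1 of Chadjiconstantinidis and Eryilmaz, whose argument is exactly the tail-probability identity $\mathbb{P}(T_1>n)=p_{m,n}(0)$ combined with substituting $w=0$ into Theorem \ref{11:th3}. You have simply written out the telescoping and boundary-term details that the paper leaves implicit, and they check out.
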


\noindent
It is known that $W_i$'s are \textit{iid} random variables.  Therefore, using \eqref{11:eq31}, the following theorem can be readily established.

\begin{theorem}\label{11:th4}
    For $r\ge 1$, the pgf of $T_r$ is given by
    \begin{align}
        \psi_r(z)=\left(\frac{\prod_{i=1}^{m}((p_iz)^{\ell_i}-(p_i z)^{k_i+1})}{(1-z)\prod_{i=2}^{m-1}(1-p_i z)+\prod_{i=1}^{m}((p_iz)^{\ell_i}-(p_i z)^{k_i+1})}\right)^r.\label{11:eq42}
    \end{align}
\end{theorem}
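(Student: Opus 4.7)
The plan is to exploit the decomposition $T_r = \sum_{i=1}^r W_i$ recorded in \eqref{11:eq31} together with the preceding lemma, which supplies the closed-form pgf $\psi_1(z)$ of the first waiting time. The crucial structural input has already been established in the paragraph before the theorem statement: because $\xi_1,\xi_2,\ldots$ is an iid multinomial sequence and the pattern \textbf{(R)} is determined by a bounded window of consecutive symbols, the process regenerates after each completed occurrence of \textbf{(R)}, so the inter-arrival times $W_1,W_2,\ldots,W_r$ are iid with common pgf $\psi_1(z)$.

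Given this, the theorem reduces to a one-line application of the standard fact that the pgf of a sum of independent random variables factorises as the product of their pgfs. Explicitly, by independence and identical distribution of the $W_i$'s,
\begin{equation*}
\psi_r(z) \;=\; \mathbb{E}\!\left(z^{T_r}\right) \;=\; \mathbb{E}\!\left(\prod_{i=1}^r z^{W_i}\right) \;=\; \prod_{i=1}^r \mathbb{E}\!\left(z^{W_i}\right) \;=\; \bigl(\psi_1(z)\bigr)^r,
\end{equation*}
and substituting the expression for $\psi_1(z)$ from the preceding lemma immediately yields \eqref{11:eq42}.

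The only nontrivial point in the argument is the iid property of the inter-arrival times, which is the sole place where the underlying multi-state iid assumption on $\xi_1,\xi_2,\ldots$ is used; the paper invokes this regenerative observation just before stating the theorem, so I would simply cite that remark rather than re-derive it. Beyond that, no computation is needed and the result follows directly from the lemma.
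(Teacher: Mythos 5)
Your proposal matches the paper's own argument: the paper likewise asserts that the $W_i$'s are iid (appealing to the iid multi-state structure in the paragraph preceding the theorem), and then obtains $\psi_r(z)=(\psi_1(z))^r$ from the decomposition \eqref{11:eq31} together with the lemma giving $\psi_1(z)$. Your write-up is correct and, if anything, slightly more explicit than the paper about the pgf factorisation step.
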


\begin{remark}
    If we allow $k_i\to \infty$, for $i=1,2,\ldots,m$ then the pgf of $T_r$ for the pattern $E_{\ell_1}\prec E_{\ell_2}\prec \cdots \prec E_{\ell_m}$ can be expressed as 
    \begin{align*}
        \psi_r(z)=\left(\frac{\prod_{i=1}^{m}(p_iz)^{\ell_i}}{(1-z)\prod_{i=2}^{m-1}(1-p_i z)+\prod_{i=1}^{m}(p_iz)^{\ell_i}}\right)^r,
    \end{align*}
   which coincides with Theorem 1 of Chadjiconstantinidis and Eryilmaz \cite{CE2023}, as expected.
\end{remark}

\noindent
Further, by substituting $\ell_i=k_i$ and $\ell_i=1$ for $i=1,2,\ldots,m$ in Theorem \ref{11:th4}, we obtain the following corollary.

\begin{corollary}
    \begin{itemize}
        \item[(i)] For $r\ge 1$, the pgf of $T_r$ for the pattern $F_{k_1}\prec F_{k_2}\prec \cdots \prec F_{k_m}$ is given by
    \begin{align*}
        \psi_r(z)=\left(\frac{\prod_{i=1}^{m}(p_iz)^{k_i}(1-p_iz)}{(1-z)\prod_{i=2}^{m-1}(1-p_i z)+\prod_{i=1}^{m}(p_iz)^{k_i}(1-p_iz)}\right)^r.
    \end{align*}
    \item[(i)] For $r\ge 1$, the pgf of $T_r$ for the pattern $G_{k_1}\prec G_{k_2}\prec \cdots \prec G_{k_m}$ is given by
    \begin{align*}
        \psi_r(z)=\left(\frac{\prod_{i=1}^{m}(p_iz)(1-(p_iz)^{k_i})}{(1-z)\prod_{i=2}^{m-1}(1-p_i z)+\prod_{i=1}^{m}(p_iz)(1-(p_iz)^{k_i})}\right)^r.
    \end{align*}
    \end{itemize}
\end{corollary}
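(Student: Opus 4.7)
The plan is to derive both parts directly from Theorem~\ref{11:th4} by specializing the parameters $(\ell_i, k_i)$ in accordance with the identifications $E_{\ell_i,k_i} = F_{k_i}$ (when $\ell_i = k_i$) and $E_{\ell_i,k_i} = G_{k_i}$ (when $\ell_i = 1$) that were recorded in Section~\ref{11:sec1}. Since the pgf in \eqref{11:eq42} was established for the general run $E_{\ell_1,k_1} \prec \cdots \prec E_{\ell_m,k_m}$, applying these substitutions immediately produces the waiting time pgfs for the two reduced patterns.

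For part~(i), I set $\ell_i = k_i$ in \eqref{11:eq42}. The per-index factor collapses as
\[
(p_i z)^{\ell_i} - (p_i z)^{k_i+1} \;=\; (p_i z)^{k_i} - (p_i z)^{k_i+1} \;=\; (p_i z)^{k_i}\bigl(1 - p_i z\bigr),
\]
and the very same expression appears in the bracketed sum in the denominator. Taking the product over $i=1,\ldots,m$ and raising to the $r$th power yields the stated formula.

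For part~(ii), I set $\ell_i = 1$ in \eqref{11:eq42}. The per-index factor now collapses as
\[
(p_i z)^{\ell_i} - (p_i z)^{k_i+1} \;=\; (p_i z) - (p_i z)^{k_i+1} \;=\; (p_i z)\bigl(1 - (p_i z)^{k_i}\bigr),
\]
and the same substitution is carried through the denominator. Taking the product over $i$ and the $r$th power gives the claimed expression.

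Both reductions are purely algebraic, so there is no genuine obstacle; the only thing that needs to be checked is the single-site factorization of $(p_i z)^{\ell_i} - (p_i z)^{k_i+1}$ under the two choices of $\ell_i$, which is immediate. The structural factor $(1-z)\prod_{i=2}^{m-1}(1-p_i z)$ in the denominator is untouched by the substitutions, which is why it carries over unchanged into both stated formulas.
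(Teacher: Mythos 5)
Your proposal is correct and follows exactly the paper's route: the corollary is obtained by substituting $\ell_i=k_i$ (for part (i)) and $\ell_i=1$ (for part (ii)) into the pgf \eqref{11:eq42} of Theorem \ref{11:th4}, and the single-site factorizations $(p_iz)^{k_i}-(p_iz)^{k_i+1}=(p_iz)^{k_i}(1-p_iz)$ and $(p_iz)-(p_iz)^{k_i+1}=(p_iz)\bigl(1-(p_iz)^{k_i}\bigr)$ are all that is needed. Nothing is missing.
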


\noindent
Using \eqref{11:41} in \eqref{11:eq42}, the following corollary can be easily obtained.

\begin{corollary}
\begin{itemize}
    \item[(i)] The probability mass function of $T_r$ satisfies the following recursive relation
    \begin{align*}
        h_r(s)&+\sum_{j=1}^{m-1}(\alpha_j-\alpha_{j-1})h_r(s-j)-\alpha_{m-2}h_r(s-m+1)+\prod_{i=1}^{m}p_i^{\ell_i}(h_r(s-\ell)-h_{r-1}(s-\ell))\\
        &=\prod_{i=1}^{m}p_i^{k_i+1}(h_r(s-k-m)-h_{r-1}(s-k-m))-\prod_{i=1}^{m}p_i^{\ell_i}(h_r(s-\ell)-h_{r-1}(s-\ell)),
    \end{align*}
    for $s\ge r\ell$, with initial conditions $h_0(s)=\delta_{s,0}$ and $h_r(s)=0$, for $s<r\ell$. 
    \item[(ii)] Let $\mu_{r,n}=\mathbb{E}((T_r)^n)$. Then $\mu_{r,n}$ satisfies the following recursive relation
    \begin{align*}
        \mu_{r,n}&-\sum_{u=0}^{n}\binom{n}{u}\mu_{r,u}+\sum_{j=1}^{m-2}\frac{n!}{(n-j)!} \alpha_j \mu_{r,n-j}-\sum_{j=1}^{m-2} \frac{n!}{(n-j)!} \alpha_j \sum_{u=0}^{n}\binom{n}{u}\mu_{r,n-j}\\
        &=\prod_{i=1}^{m}p_i^{k_i+1} \sum_{u=0}^{n}\binom{n}{u}(m+k)^{n-u}(\mu_{r,u}-\mu_{r-1,u})- \prod_{i=1}^{m}p_i^{\ell_i} \sum_{u=0}^{n}\binom{n}{u}\ell^{n-u}(\mu_{r,u}-\mu_{r-1,u})
    \end{align*}
    with initial condition $\mu_{0,n}=\delta_{n,0}$. Here, $\ell=\ell_1+\ell_2+\cdots+\ell_m$, $k=k_1+k_2+\cdots+k_m$, and $\delta_{s,0}$ is the Kronecker delta function.
    \end{itemize}
\end{corollary}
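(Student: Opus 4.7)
The plan is to convert the closed-form for $\psi_r(z)$ in Theorem \ref{11:th4} into a polynomial functional identity and then extract coefficients of $z^s$ to obtain part~(i); part~(ii) will follow by weighting the probability-mass recursion with $s^n$ and summing. For brevity I will abbreviate $A(z):=\prod_{i=1}^{m}\bigl((p_iz)^{\ell_i}-(p_iz)^{k_i+1}\bigr)$ and $B(z):=\prod_{i=2}^{m-1}(1-p_iz)$, so that Theorem \ref{11:th4} reads $\psi_1(z)=A(z)/\bigl[(1-z)B(z)+A(z)\bigr]$.

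First I will exploit the multiplicativity $\psi_r(z)=\psi_1(z)\psi_{r-1}(z)$, which holds because $T_r=\sum_{i=1}^{r}W_i$ with i.i.d.\ inter-arrival times $W_i$. Substituting the closed form of $\psi_1$ and clearing the denominator will yield the identity
\begin{equation*}
(1-z)\,B(z)\,\psi_r(z)=A(z)\bigl[\psi_{r-1}(z)-\psi_r(z)\bigr].
\end{equation*}
Next I will expand $(1-z)B(z)=1+\sum_{j=1}^{m-2}(\alpha_j-\alpha_{j-1})z^j-\alpha_{m-2}z^{m-1}$ via \eqref{11:41}, and expand $A(z)$, whose lowest- and highest-degree monomials are $\prod_{i=1}^{m}p_i^{\ell_i}\,z^{\ell}$ and $(-1)^m\prod_{i=1}^{m}p_i^{k_i+1}\,z^{k+m}$, respectively. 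Reading off the coefficient of $z^s$ on each side, together with the boundary condition $h_r(s)=0$ for $s<r\ell$ (a consequence of $T_r\ge r\ell$), will produce the claimed recurrence for $h_r(s)$ in part~(i).

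For part~(ii) I will proceed from the pmf recursion of part~(i) by multiplying both sides by $s^n$ and summing over $s\ge 0$. Each shift $h(\cdot-c)$ transforms under the binomial identity $\sum_{s}s^n h(s-c)=\sum_{u=0}^{n}\binom{n}{u}c^{n-u}\mu_{u}$, so shifts by $j$, $\ell$ and $k+m$ become polynomial combinations of the lower-order moments $\mu_{r,u}$ and $\mu_{r-1,u}$ for $u\le n$. The boundary condition $\mu_{0,n}=\delta_{n,0}$ comes directly from $T_0\equiv 0$. Collecting terms will then deliver the stated moment recursion.

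The main obstacle I foresee is controlling the full expansion of $A(z)$, which a priori produces $2^m$ monomials in $z$ with alternating signs, whereas the statement records only the two extremal monomials of degrees $\ell$ and $k+m$. Part of the work will therefore be either to justify why the intermediate cross-contributions collapse into the displayed two-term form, or to reconcile the stated recursion with a more detailed bookkeeping that accounts for all $2^m$ shifts and simultaneously absorbs the factor $(-1)^m$ sitting on the top-degree coefficient. Once this coefficient-matching is settled, the derivation of (ii) from (i) is a routine manipulation.
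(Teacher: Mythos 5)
Your overall route is the same one the paper implicitly takes: the paper offers no argument beyond ``using \eqref{11:41} in \eqref{11:eq42} the corollary is easily obtained,'' which is precisely your plan of clearing the denominator of $\psi_r(z)$ and matching coefficients of $z^s$. Your functional identity $(1-z)B(z)\psi_r(z)=A(z)\bigl[\psi_{r-1}(z)-\psi_r(z)\bigr]$ is correct (it follows from $\psi_r=\psi_1\psi_{r-1}$ and $\psi_1=A/[(1-z)B+A]$), the expansion $(1-z)B(z)=1+\sum_{j=1}^{m-2}(\alpha_j-\alpha_{j-1})z^j-\alpha_{m-2}z^{m-1}$ is right, and the passage from (i) to (ii) via $\sum_{s}s^n h(s-c)=\sum_{u=0}^{n}\binom{n}{u}c^{n-u}\mu_u$ is the standard and correct move.

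The gap is the one you flagged yourself, and it does not resolve in favour of the printed statement: the intermediate monomials of $A(z)$ do \emph{not} collapse. Writing $A(z)=\prod_{i=1}^{m}(p_iz)^{\ell_i}\bigl(1-(p_iz)^{k_i-\ell_i+1}\bigr)$, already for $m=2$ with $\ell_i=k_i$ one gets $p_1^{k_1}p_2^{k_2}z^{k_1+k_2}(1-p_1z)(1-p_2z)$, whose middle term $-(p_1+p_2)z^{k_1+k_2+1}$ survives and contributes a shift absent from the two-term right-hand side. An honest coefficient extraction therefore yields
\begin{align*}
h_r(s)+\sum_{j=1}^{m-2}(\alpha_j-\alpha_{j-1})h_r(s-j)-\alpha_{m-2}h_r(s-m+1)
=\sum_{S\subseteq\{1,\dots,m\}}(-1)^{|S|}\Bigl(\prod_{i\in S}p_i^{k_i+1}\Bigr)\Bigl(\prod_{i\notin S}p_i^{\ell_i}\Bigr)\bigl(h_{r-1}(s-d_S)-h_r(s-d_S)\bigr),
\end{align*}
with $d_S=\sum_{i\in S}(k_i+1)+\sum_{i\notin S}\ell_i$, i.e.\ up to $2^m$ shifted terms, of which the statement keeps only $S=\emptyset$ and $S=\{1,\dots,m\}$ (and even then the top-degree term should carry the sign $(-1)^m$). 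Note also that the printed identity has the $\prod_i p_i^{\ell_i}$ term appearing on both sides and the first sum running to $m-1$ rather than $m-2$, which are further signs that the statement itself is garbled. So your plan, carried out rigorously, proves a corrected recursion rather than the one displayed; to close the write-up you should either prove that corrected form or restrict to the special cases (e.g.\ $k_i\to\infty$ for all $i$, where $A(z)$ genuinely is a single monomial) in which the displayed form is exact.
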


\begin{remark}
It is worth to note that if our interest lies in specific patterns pertaining to at least, exactly, and at most cases, we can obtain these patterns by setting $k_i\to \infty$, $\ell_i=k_i$, and $\ell_i=1$ for $i\subseteq \{1,2,\ldots,m\}$. Consequently, all the results mentioned above can be derived accordingly for that particular pattern. 
\end{remark}

\section{An Application to DNA Frequent Patterns}\label{11:sec4}
DNA sequencing is an important tool used to determine the nucleic acid sequence, that is, the order of nucleotides in DNA. The sequence consists of four letters: A, C, G, and T, which represent the nucleotides adenine, cytosine, guanine, and thymine, respectively. DNA frequent patterns refer to specific sub-patterns in DNA that occur more frequently than a manually set minimum support degree. For example, CG is a frequent pattern in the DNA sequence \{C, G, A, T, C, G\} when the minimum support degree is set to 2. If we increase the minimum support degree to more than 4 and aim to find the distribution of occurrences of DNA frequent patterns according to the studied patterns in this paper, then the distribution of $(k_1, k_2, k_3, k_4)$-runs becomes useful for studying the distribution of such DNA frequent patterns. For more details, see Deng {\em et al.} \cite{DCLX2019}, Yildiz and Selale \cite{BH2011}, and references therein.\\
To illustrate this, let us consider the occurrences of A, C, G, and T in a DNA sequence with probabilities $p_1$, $p_2$, $p_3$, and $p_4$, respectively. Suppose $n=50$ and the specific DNA frequent pattern is ``ACCGT'', that is, a $(1,2,1,1)$-run with exact occurrences. From Corollary 2, the double generating function of the considered runs is given by:
\begin{align}
\Phi_4(w,z)=\frac{\prod_{i=2}^{3}(1-p_i z)}{(1-z)\prod_{i=2}^{3}(1-p_iz)-(w-1)\prod_{i=1}^{4}(p_i z)^{k_i}(1-p_i z)},\label{11:exp1}
\end{align} 
where $k_1=1, k_2=2, k_3=k_4=1$. Note here that the minimum support degree is set to 5. Hence, the probability that the specific DNA frequent pattern occurs more than five times can be calculated using this expression. For instance, if $p_1=0.1$, $p_2=0.3$, $p_3=0.2$, and $p_4=0.4$, then the probabilities for the occurrences of the specific DNA frequent pattern are as follows:
\begin{center}
\begin{tabular}{|l|l|l|}
\hline
$m$ & $\mathbb{P}(X_{4;1,2,1,1}^{50}=m)$\\
\hline
$5$ & $7.17960 \times 10^{-9}$\\
\hline
$6$ & $2.68519 \times 10^{-11}$\\
\hline
$7$ & $4.73516 \times 10^{-14}$\\
\hline
$8$ & $3.03176 \times 10^{-17}$\\
\hline
$9$ & $3.78619 \times 10^{-21}$\\
\hline
$10$ & $6.34034 \times 10^{-27}$\\
\hline
\end{tabular}
\end{center}
As expected, the probability is very small as we are considering the occurrences of specific patterns more than 4 times. Similarly, different types of patterns used in various applications can be analysed using this approach.

\section*{Data Availability}
No data was used for the research described in the article.

\section*{Acknowledgements}
The first author is financially supported by TA fellowship from IIT (BHU) Varanasi, India. The second author is partially supported by SERB Start-up Research Grant (File No. SRG/2023/000236) and MATRICS Research Grant (File No. MTR/2023/000200), India.

\bibliographystyle{PV}
\bibliography{PV}

\end{document}